\documentclass[11pt]{amsart}

\setlength{\oddsidemargin}{1cm} \setlength{\evensidemargin}{1cm}\setlength{\textwidth}{14cm} 
\setlength{\topmargin}{0cm}\setlength{\textheight}{23cm} \setlength{\headheight}{.1in}\setlength{\headsep}{.3in}\setlength{\parskip}{.5mm} 

\usepackage{amsmath,amssymb,amscd}
\usepackage[dvips]{graphics}

\newtheorem{theorem}{Theorem}[section]
\newtheorem{corollary}[theorem]{Corollary}

\newtheorem{lemma}[theorem]{Lemma}
\newtheorem{proposition}[theorem]{Proposition}
\newtheorem{remark}[theorem]{Remark}

\def\RR{\mathbb{R}}
\def\CC{\mathbb{C}}

\def\PP{\mathbb{P}}
\def\ZZ{\mathbb{Z}}

\newcommand{\cE}{{\mathcal E}}

\newcommand{\cL}{{\mathcal L}}
\newcommand{\cO}{{\mathcal O}}

\newcommand{\cF}{{\mathcal F}}

\def\GL{\operatorname{GL}}
\def\pf{\operatorname{Pf}}
\def\ker{\operatorname{Ker}}
\def\coker{\operatorname{Coker}}
\def\M{\operatorname{M}}

\def\Id{\operatorname{Id}}
\def\pic{\operatorname{Pic}}
\def\ext{\operatorname{Ext}}

\begin{document}

\title{Indecomposable Matrices Defining Plane Cubics}
\author{Anita Buckley}

\address{Department of Mathematics, University of Ljubljana,
Jadranska 19, 1000 Ljubljana, Slovenia.
{\rm e-mail:} {\it anita.buckley@fmf.uni-lj.si}}%

\begin{abstract}
In this article we find all (decomposable and indecomposable) $6\times 6$  linear determinantal representations of  Weierstrass cubics.  As a corollary we verify the Kippenhahn conjecture for $\M_6$.
\end{abstract}

\maketitle

\section{Introduction}
\label{introdef}

Let $C$ be an irreducible curve in $\CC\PP^2$ defined by a polynomial $F(x,y,z)$ of degree $3$.
Every smooth cubic can be brought by a projective change of coordinates \cite{mumford} into a Weierstrass form
$$F(x,y,z)= y z^2 - x(x-y)(x-\lambda y)= 0,$$
or equivalently
$$F(x,y,z)=-y z^2+x^3+\alpha x y^2+\beta y^3=0,$$
 for some  $\lambda \neq 0, 1$ and $\alpha,\beta\in\CC$.

We consider the following question. For given $C$ find 
 a linear matrix
$$A(x,y,z)=x\,A_x+y\,A_y+z\,A_z$$
such that 
$$\det A(x,y,z)=c\, F(x,y,z)^r,$$
where $A_x,A_y,A_z \in \M_{3r}$ and $0 \neq  c\in \CC$. Here $\M_{3r}$ is the algebra of all $3r\times 3r$ matrices over $\CC$. 

We call $A$ \textit{determinantal representation}  of $C$ of order $r$.  Two determinantal representations $A$ and $A'$ are \textit{equivalent} if there exist $X, Y\in\GL_{3r}(\CC)$ such that
$$A'=XAY.$$ 
We study self-adjoint representations $A=A^{\ast}$ modulo unitary equivalence $Y=X^{\ast}$ and
skew-symmetric representations $A=-A^t$ under $Y=X^t$ equivalence.
Obviously, equivalent determinantal representations define the same curve.
\textit{Pfaffian representation} is a representation of order 2 with all $6\times 6$ matrices being skew-symmetric.
Study of pfaffian representations is strongly related to determinantal representations:
every $3\times 3$ determinantal representation $A$
induces a \textit{decomposable pfaffian representation}
$$\left[ \begin{array}{cc}
0 & A \\
-A^t & 0
\end{array}\right].$$
Note that the equivalence relation is well defined since
$$\left[ \begin{array}{cc}
0 & X A Y \\
-(X A Y)^t & 0
\end{array}\right]=\left[ \begin{array}{cc}
X & 0 \\
0 & Y^t
\end{array}\right]\left[ \begin{array}{cc}
0 & A \\
-A^t & 0
\end{array}\right]\left[ \begin{array}{cc}
X^t & 0 \\
0 & Y
\end{array}\right].$$\\

The history of determinantal representations of order 1 goes back to the middle of the 19th century~\cite{grassman},~\cite{schur}. Dickson~\cite{dickson} classified hypersurfaces that can be represented as linear determinants.
In the last two decades determinantal representations of hypersurfaces again became extremely popular due to their use in semidefinite programming~\cite{vinni3}. Semidefinite programming feasibly uses  convex sets determined by  linear matrix inequalities  
$$\left\{ (x_1,\ldots,x_d)\in \RR^d\ :\ A_0+x_1 A_1+\cdots + x_d A_d\geq 0\right\}, $$
where $A_0,\ldots,A_d$ are  real symmetric or complex hermitian  matrices. A natural question in this framework is whether a  positive integer power of the polynomial has a determinantal representations.

All modern treatments of  determinantal representations of order 1 involve~\cite{cook},  the famous $1-1$ correspondence between
detreminantal representations of hypersurfaces and line bundles (i.e., points on
the Jacobian variety).
Analogously,
there is a one to one correspondence between linear pfaffian representations (up
to equivalence)  and rank 2 vector bundles (up to isomorphism) with
certain properties. These well known results are summed up in Beauville~\cite{beauville}. 

Elliptic curves are of tame representation type according to
Atiyah~\cite{atiyah}. In particular, on a given cubic curve the number
of indecomposable ACM bundles of rank $r$ with trivial determinant equals to the
number of $r$-torsion points.
Recently, Ravindra and Tripathi~\cite{ravindra} used these indecomposable vector bundles to prove the existence of  indecomposable determinantal representations of order greater than two.
 
Vinnikov~\cite{vinnikov2} explicitly parametrised $3\times 3$ determinantal representations of a Weierstrass cubic by the affine points on the same cubic.  In this paper we classify up to equivalence all  linear pfaffian representations of a Weierstrass cubic: the decomposable ones are parametrized by the affine points on the cubic, additionally there are three indecomposable representations arising from nontrivial extensions of even theta characteristics (i.e., line bundles corresponding to 2-torsion points). The classification is based on  Lancaster-Rodman canonical forms of matrix pairs~\cite{rodman}.
 More generally, any linear representation of order 2 is equivalent to either a  block linear matrix or its cokernel is a degree 0 indecomposable rank 2 bundle of Atiyah.

The main result of this paper is Theorem~\ref{newthree} containing the  construction of the three indecomposable pfaffian representations for a Weierstrass cubic. We outline similar constructions for indecomposable determinantal representations of order $r\geq 2$ corresponding to indecomposable vector bundles of rank $r$. These computations are an appendix to the paper of Ravindra and Tripathi~\cite{ravindra}. 
As  a corollary we verify the Kippenhahn conjecture for $\M_6$. 

The author wishes to thank G. V. Ravindra for his guidance and his patience, and to I. Klep for pointing out the connection between our constructions and  the conjecture of Kippenhahn.

\section{Determinantal representations of Vinnikov}
\label{detrepp}

Vinnikov~\cite{vinnikov2} found an explicit one to one correspondence between the $3\times 3$ determinantal representations
(up to equivalence) of $C$ and the points on an affine piece of $C$:

\begin{lemma}[\cite{vinnikov2}] \label{vinncubb} Every smooth cubic in $\PP^2$ can be brought into the Weierstrass form
$$F(x_0,x_1,x_2)=-x_1 x_2^2+x_0^3+\alpha x_0 x_1^2+\beta x_1^3.$$
A complete set of determinantal representations of $F$ is
\begin{equation}\label{eqvinn}
x_0 \Id+x_2 \left[\begin{array}{ccc} 0&1&0\\ 0&0&1\\ 0&0&0 \end{array}\right]
+x_1 \left[\begin{array}{ccc} \frac{t}{2}&s&\alpha+\frac{3}{4}t^2\\ 0&-t&-s\\ -1&0&\frac{t}{2} \end{array}\right],
\end{equation}
where $s^2=t^3+\alpha t+\beta.$ Note that the last equation is exactly the affine part $F(t,1,s)$.
\end{lemma}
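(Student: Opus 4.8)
The plan is to prove the two assertions separately: that $F$ can be put in the stated Weierstrass form, and that (\ref{eqvinn}) is a complete and irredundant list of determinantal representations. The normal form is classical: every smooth plane cubic carries a flex, and after a projective transformation sending a chosen flex to $[0:0:1]$ and its tangent line to $\{x_1=0\}$ the equation becomes $-x_1x_2^2+x_0^3+\alpha x_0x_1^2+\beta x_1^3$ (see \cite{mumford}); I would simply cite this. For the family itself, the first step is a direct verification that the matrix in (\ref{eqvinn}) has determinant $F$. Expanding along the first column and writing the entries in terms of $t$ and $s$, the determinant collapses to $x_0^3-x_1x_2^2+\alpha x_0x_1^2+(s^2-t^3-\alpha t)x_1^3$, and the defining relation $s^2=t^3+\alpha t+\beta$ turns the last coefficient into $\beta$. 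Thus every member of the family is genuinely a determinantal representation of $F$, with $c=1$. This step is routine.

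The substance is completeness: that every $3\times 3$ determinantal representation of $F$ is equivalent to exactly one member of (\ref{eqvinn}). Here I would invoke the cokernel correspondence of \cite{cook}, \cite{beauville}. To a representation $A$ one associates the line bundle $\cL=\coker\big(\cO_{\PP^2}(-1)^3\xrightarrow{A}\cO_{\PP^2}^3\big)$ supported on $C$; comparing Hilbert polynomials in the defining exact sequence shows $\deg\cL=d+g-1=3$, and the vanishing $h^0(\cL(-1))=0$ forced by the resolution amounts, since $\cL(-1)$ has degree $0$, to the single condition $\cL\not\cong\cO_C(1)$. The correspondence is a bijection between equivalence classes of representations and the set $\{\cL\in\pic^3(C):\cL\neq\cO_C(1)\}$, so it remains to match this set with the affine cubic.

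For the last step I would fix the flex $\infty=[0:0:1]$ as the origin of the group law, so that $\cO_C(1)\cong\cO_C(3\infty)$, and use the Abel--Jacobi isomorphism $C\xrightarrow{\sim}\pic^3(C)$, $P\mapsto\cO_C(2\infty+P)$. This sends $\infty$ to $\cO_C(3\infty)=\cO_C(1)$, hence restricts to a bijection $C\setminus\{\infty\}\xrightarrow{\sim}\{\cL\in\pic^3(C):\cL\neq\cO_C(1)\}$; and $C\setminus\{\infty\}$ is precisely the affine Weierstrass cubic $\{(t,s):s^2=t^3+\alpha t+\beta\}$ in the coordinates $t=x_0/x_1$, $s=x_2/x_1$. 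To finish I would compute, for the representation attached to $(t,s)$, its cokernel line bundle and identify it with the point $P=(t,s)$ under this isomorphism; equivalently, one checks directly that distinct parameters give inequivalent representations, so that the morphism from the affine cubic to the set of equivalence classes is injective, and, the two one-parameter families being matched, a bijection. The main obstacle is exactly this identification --- pinning down the cokernel of (\ref{eqvinn}) as $\cO_C(2\infty+P)$, or equivalently proving injectivity and surjectivity of the explicit family --- since the determinant computation only certifies membership, not completeness or irredundancy.
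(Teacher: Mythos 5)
Your route is genuinely different from the paper's. The paper proves the lemma by elementary linear algebra, driving an arbitrary representation $A=x_0A_0+x_2A_2+x_1A_1$ into a Lancaster--Rodman canonical form: $A_0$ is invertible because $\det A(1,0,0)\neq 0$, so one may normalize $A_0=\Id$; then $\det A(x_0,0,-1)=x_0^3$ shows $A_2$ is nilpotent, the $x_1x_2^2$ term forces a single Jordan block of order three, and finally the subgroup of pairs $(X,Y)\in\GL_3\times\GL_3$ preserving $(A_0,A_2)$ is used to reduce $A_1$ to the displayed matrix, with the relation $s^2=t^3+\alpha t+\beta$ falling out of $\det A=F$. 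In that approach completeness is automatic, since \emph{every} representation is explicitly reduced into the family. You instead invoke the Cook--Thomas/Beauville cokernel correspondence together with Abel--Jacobi; this is a legitimate alternative (it is how the paper \emph{interprets} the lemma afterwards, in Proposition~\ref{prop22}), and your bookkeeping of degrees and of the excluded bundle $\cO_C(1)$ is correct. But this strategy shifts the entire content of the lemma onto one step: identifying which point of $\pic^3(C)$ the explicit matrix (\ref{eqvinn}) has as its cokernel.

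That step is precisely what you do not carry out, and it is a genuine gap, not a routine omission: you yourself call it ``the main obstacle.'' The determinant verification only certifies membership, as you note, so without the cokernel identification neither completeness nor irredundancy is established. Moreover, the fallback you offer in its place is logically invalid as stated: injectivity of the map from the affine cubic to the set of equivalence classes, plus the observation that both sides are ``one-parameter families,'' does not give surjectivity --- an injective morphism of affine curves need not be onto (consider the open inclusion $\mathbb{A}^1\setminus\{0\}\hookrightarrow\mathbb{A}^1$), and injectivity itself is also only asserted (``one checks directly''), not proved. To close the argument you would need either (a) the actual computation of $\coker A_{(t,s)}$, e.g.\ via the adjugate matrix, showing it is $\cO_C(2\infty+P)$ for $P=(t,s)$, which simultaneously gives injectivity and surjectivity; or (b) a proof that your map is an algebraic morphism, extend it to the smooth projective models where an injective morphism is an isomorphism, and check that the missing point $\infty$ must map to the missing class $\cO_C(1)$. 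Either of these is comparable in labor to the paper's canonical-form reduction, which obtains the same conclusion with no moduli-theoretic input.
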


We briefly repeat the proof as we will use similar ideas for $6 \times 6$ skew-symmetric determinantal representations.

\begin{proof}  Let $A(x_0,x_1,x_2)=x_0A_0+x_2A_2+x_1A_1$ be a representation of $F(x_0,x_1,x_2)$.  First we show that it is equivalent to a representation with 
$$A_0=\Id \mbox{ and } A_2=\left[\begin{array}{ccc} 0&1&0\\ 0&0&1\\ 0&0&0 \end{array}\right].$$ 
 Observe that  $A_0$ is invertible  since $\det A(1,0,0)\neq 0$. We can multiply $A(x_0,x_1,x_2)$ by $A_0^{-1}$ to obtain an equivalent representation with $A_0=\Id$. The characteristic polynomial of $A_2$ equals $\det A(x_0,0,-1)=x_0^3$ which implies that $A_2$ is nilpotent. The nonzero term $x_1 x_2^2$ in $F$ determines the order of nilpotency.
We remark that this is a Lancaster--Rodman canonical form for real
matrix pairs~\cite{rodman} .
Further  $\GL_{3}$ action  from left and right which preserves this canonical form (the first two matrices in the determinantal representation) reduces $A_1$ to the above.

\end{proof}

The above is an implementation of the classic Cook--Thomas correspondence~\cite{cook}: 
\begin{proposition}[\cite{beauville},Proposition 3.1.]\label{prop22}
Let $\cL$ be a line bundle of degree $0$ on $C$ with $H^0(C,\cL)=0$ . Then there exists a $3\times 3$ linear matrix $A$ such that $F=\det A$ and 
$$0 \rightarrow  \cO_{\PP^2}(-2)^3 \stackrel{A}{ \longrightarrow} \cO_{\PP^2}(-1)^3 \rightarrow  \cL \rightarrow 0. $$
Conversely, let $A$ be a $3\times 3$ linear matrix with   $F=\det A$. Then the cokernel of $A$ is a line bundle $\cL$ on $C$ of 
 degree $0$ with $H^0(C,\cL)=0$ .
\end{proposition}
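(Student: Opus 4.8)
The plan is to realise both directions through the classical Cook--Thomas/Hilbert--Burch machinery, viewing $\cL$ as a coherent sheaf on the plane via the inclusion $i\colon C\hookrightarrow\PP^2$. Since everything is driven by the cohomology of $\cL$ and its twists, I would first record, using Riemann--Roch on the genus $1$ curve $C$ (where $\omega_C\cong\cO_C$ and $\cO_C(1)$ has degree $3$), that $\chi(\cL(n))=3n$, that $h^0(C,\cL(n))=3n$ and $h^1(C,\cL(n))=0$ for $n\ge 1$, and that the hypotheses $\deg\cL=0$ and $H^0(C,\cL)=0$ force $h^0(\cL)=h^1(\cL)=0$.

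For the forward direction I would put $\cF=i_*\cL$ and study the saturated graded module $M=\bigoplus_n H^0(\PP^2,\cF(n))$ over $S=\CC[x_0,x_1,x_2]$. First I would check that $\cF$ is $1$-regular in the sense of Castelnuovo--Mumford: $H^1(\cF)=H^1(C,\cL)=0$ and $H^2(\cF(-1))=0$ because $\cF$ is supported on a curve. Since $M$ is saturated its depth is at least $2$, while $\dim M=2$, so $M$ is Cohen--Macaulay and Auslander--Buchsbaum gives $\operatorname{pd}_S M=1$; hence $M$ admits a free resolution $0\to F_1\to F_0\to M\to 0$. Regularity together with $M_0=0$ and $\dim_\CC M_1=3$ forces three generators in degree $1$, i.e.\ $F_0=\cO_{\PP^2}(-1)^3$ on the sheaf level; since the induced map on degree $1$ pieces $(S(-1)^3)_1\to M_1$ is then an isomorphism, the first syzygies start in degree $2$, and matching Hilbert series pins down $F_1=\cO_{\PP^2}(-2)^3$. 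Sheafifying yields
$$0 \rightarrow \cO_{\PP^2}(-2)^3 \stackrel{A}{\longrightarrow} \cO_{\PP^2}(-1)^3 \rightarrow \cL \rightarrow 0,$$
whose differential $A$ has linear entries because it maps $S(-2)$ into $S(-1)$. Finally $\det A$ is a cubic form, nonzero because $A$ is injective, that vanishes on the support $C$ of the cokernel; as $F$ is irreducible of degree $3$ this forces $\det A=cF$ for some $c\in\CC^\ast$.

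For the converse I would start from a linear matrix $A$ with $\det A=F$ and set $\cL=\coker A$. The equality $\det A=F\neq 0$ shows $A$ is injective as a map of sheaves, producing the same short exact sequence; the cohomology of $\cO_{\PP^2}(-1)^3$ and $\cO_{\PP^2}(-2)^3$ then gives $H^0(\cL)=0$, and the Hilbert polynomial computation again yields $\chi(\cL(n))=3n$. It remains to identify $\cL$ as a line bundle on $C$. From $A\,\adj A=\det A\cdot\Id=F\cdot\Id$ the cokernel is annihilated by $F$, so $\cL$ is an $\cO_C$-module supported on $C$; and because $C$ is smooth and $F=\det A$ is reduced, a local computation shows the corank of $A$ is exactly $1$ at each point of $C$, so that $\cL$ is locally free of rank $1$ and $\deg\cL=0$ by the above.

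The step I expect to be most delicate is this last local analysis in the converse: turning ``$\det A$ vanishes to order one along the smooth curve $C$'' into ``$\coker A$ is locally free of rank one'', as opposed to a torsion $\cO_C$-module that merely has generic rank one but could acquire embedded or higher-corank points. In the forward direction the corresponding subtlety is confined to verifying that $M$ is Cohen--Macaulay and $1$-regular, which is exactly what excludes spurious generators and syzygies and guarantees the resolution has the stated shape.
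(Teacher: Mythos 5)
Your proposal is correct, but note that the paper does not actually prove this statement: it is quoted from \cite{beauville} (Proposition 3.1 there), and the closest the paper comes to an argument is its sketch in Section 5, where the analogous converse for $\det A=F^r$ is handled by citing Theorem A of \cite{beauville} --- the cokernel is an ACM sheaf, any Cohen--Macaulay sheaf over the regular curve $C$ is locally free, and its rank is read off by localizing at the generic point of $C$. Your forward direction is essentially the proof in the cited source (saturation and Auslander--Buchsbaum give projective dimension one, $1$-regularity and $M_0=0$, $\dim M_1=3$ pin the resolution down to $\cO_{\PP^2}(-2)^3\to\cO_{\PP^2}(-1)^3$), and it is complete. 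In the converse you take a genuinely different, more elementary route: instead of the ACM machinery you argue pointwise that the corank of $A$ is exactly $1$ along $C$. This is fine, but it is precisely the step you left as ``a local computation'', so it should be made explicit: if $\rank A(p)\leq 1$ at some $p\in C$, then every $2\times 2$ minor of $A$ vanishes at $p$, i.e. $\adj A(p)=0$; Jacobi's formula $d(\det A)=\tr(\adj A\cdot dA)$ then forces $\nabla (\det A)(p)=\nabla F(p)=0$, contradicting smoothness of $C$ at $p$. Note also that smoothness, not reducedness, is what carries this step: on a nodal (hence reduced and irreducible) cubic there exist linear determinantal representations whose corank at the node is $2$, with cokernel torsion-free of rank one but not locally free, so the phrase ``$F$ is reduced'' would not suffice. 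Once corank $\equiv 1$ is known, the rest of your argument is sound: $\coker A$ is an $\cO_C$-module (via $A\,\adj A=F\,\Id$) with one-dimensional fibres over a reduced scheme, hence a line bundle, and your Euler-characteristic and cohomology computations give $\deg\cL=0$ and $H^0(C,\cL)=0$. As a comparison of what each route buys: the ACM route of \cite{beauville}, used by the paper in Section 5, works uniformly in $r$ and immediately yields rank-$r$ bundles for $\det A=F^r$, while your pointwise argument avoids Theorem A entirely and makes transparent exactly where the smoothness hypothesis on $C$ enters.
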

The explicit formula in Lemma~\ref{vinncubb} is due to the fact that 
the set of line bundles of degree 0 with no global sections equals to $JC \backslash \cO_C$, where $JC=\pic^0C$ is the Jacobian of $C$. Recall that $JC$ equals to the curve itsef. Therefore $JC \backslash \cO_C$  can be parametrised by the affine points on the curve $C$.

Observe that if the cokernel of $A$ is $\cL$, the cokernel of $A^t$ is $\cL^{-1}$. This can also be seen directly from Lemma~\ref{vinncubb}, just
multiply (\ref{eqvinn}) by anti-identity.
Then 
\begin{equation}\label{equantidiag}
A=x_0 \left[\begin{array}{ccc} 0&0&1\\ 0&1&0\\ 1&0&0 \end{array}\right]+x_2 \left[\begin{array}{ccc} 0&1&0\\ 1&0&0\\ 0&0&0 \end{array}\right]
+x_1 \left[\begin{array}{ccc} \alpha+\frac{3}{4}t^2 & s &\frac{t}{2} \\ 
-s&-t&0\\ 
\frac{t}{2} &0&-1 \end{array}\right]\end{equation}
and $A^t$ correspond to the inverse points $(t,s)$ and $(t,-s)$ respectively on the affine part $s^2=t^3+\alpha t+\beta$ of $JC=C$. 
We can also conclude that the above $A$ is symmetric if and only if $s=0$. This implies that on $C$ there are three symmetric determinantal representations corresponding to three 2-torsion points $\cL\cong\cL^{-1}$ on $JC$. In the affine coordinates these three points are $(t,0)$, where  $t^3+\alpha t+\beta=0$.

\section{Pfaffian representations}
\label{seclast}
In this section we will implement the one to one correspondence between linear pfaffian representations and rank 2 vector bundles with certain properties, applying similar methods as in the proof of Lemma~\ref{vinncubb}. The correspondence is described in
\begin{proposition}[\cite{beauville},Proposition 5.1.]\label{BeauPfaff}
Let $\cE$ be a rank $2$ bundle on $C$ with $\det \cE= \cO_C$ and $H^0(C,\cE)=0$. Then there exists a $6\times 6$ linear skew--symmetric matrix $A$ such that $F=\pf A$ and 
$$0 \rightarrow  \cO_{\PP^2}(-2)^6 \stackrel{A}{ \longrightarrow} \cO_{\PP^2}(-1)^6 \rightarrow  \cE \rightarrow 0. $$
Conversely, let $A$ be a $6\times 6$ linear skew--symmetric matrix with   $F=\pf A$. Then the cokernel of $A$ is a rank 2 bundle $\cE$ on $C$ with the above properties .
\end{proposition}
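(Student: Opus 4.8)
The plan is to run the argument behind Proposition~\ref{prop22} one rank higher, with the skew-symmetry supplied by the canonical alternating pairing carried by a rank $2$ bundle of trivial determinant. I would dispose of the converse first, as it is the cleaner half. Given a $6\times6$ skew-symmetric linear matrix $A$ with $\pf A=F$, put $\cE=\coker\bigl(\cO_{\PP^2}(-2)^6\xrightarrow{A}\cO_{\PP^2}(-1)^6\bigr)$; since $\det A=(\pf A)^2=F^2\neq0$ the map $A$ is injective as a sheaf map, so the displayed sequence is exact and $\cE$ is supported on $C=\{F=0\}$. Because $C$ is smooth, standard matrix-factorization theory makes $\cE$ locally free on $C$, and a rank count (corank even by skew-symmetry, $\det A=F^2$) gives $\rank\cE=2$. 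The vanishing $H^0(C,\cE)=0$ is then immediate: the long exact sequence of $0\to\cO_{\PP^2}(-2)^6\to\cO_{\PP^2}(-1)^6\to\cE\to0$ together with $H^0(\PP^2,\cO(-1))=H^0(\PP^2,\cO(-2))=H^1(\PP^2,\cO(-2))=0$ forces $H^0(\cE)=0$. Finally $\det\cE=\cO_C$ will come from the skew-symmetry of $A$, which endows $\cE$ with a non-degenerate alternating form $\cE\otimes\cE\to\cO_C$, that is, a trivialization of $\wedge^2\cE=\det\cE$.

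For the forward direction I start from $\cE$ of rank $2$ with $\det\cE=\cO_C$ and $H^0(C,\cE)=0$ and first fix the numerics. The hypotheses force $\cE$ semistable: a destabilizing sub-line-bundle $\cL\subset\cE$ would have $\deg\cL\ge1$, hence $h^0(\cL)=\deg\cL\ge1$ on the genus $1$ curve, contradicting $H^0(\cE)=0$. With $\cE(1)=\cE\otimes\cO_C(1)$ and $\deg\cO_C(1)=3$, Riemann--Roch gives $\chi(\cE(1))=\deg\cE+2\cdot3=6$, while Serre duality with $\omega_C=\cO_C$ and $\cE^\vee\cong\cE$ (valid since $\det\cE=\cO_C$) yields $h^1(\cE(1))=h^0(\cE(-1))=0$, the last vanishing because $\cE(-1)$ is semistable of slope $-3<0$. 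Hence $h^0(\cE(1))=6$, and since the slope $3$ exceeds $2g=2$ these six sections generate $\cE(1)$, giving a surjection $\cO_{\PP^2}(-1)^6\twoheadrightarrow\cE$. As $\cE$ is Cohen--Macaulay of codimension $1$ in $\PP^2$, Auslander--Buchsbaum makes its minimal free resolution have length one, so the kernel is locally free and the degree count identifies it with $\cO_{\PP^2}(-2)^6$. This is the asserted resolution, with $A$ a $6\times6$ matrix of linear forms and $\det A=cF^2$.

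The main obstacle is promoting this $A$ to a skew-symmetric matrix. Here I would use the canonical alternating isomorphism $\phi\colon\cE\xrightarrow{\sim}\cE^\vee$ arising from $\wedge^2\cE=\det\cE=\cO_C$, which satisfies $\phi^\vee=-\phi$. Dualizing the resolution by $\cH om(-,\cO_{\PP^2}(-3))$, using that $\cE$ is torsion on $\PP^2$ together with the adjunction $\cE xt^1(\cE,\cO_{\PP^2}(-3))\cong\cH om_C(\cE,\omega_C)=\cE^\vee$, produces a second minimal resolution
\[
0\to\cO_{\PP^2}(-2)^6\xrightarrow{A^t}\cO_{\PP^2}(-1)^6\to\cE^\vee\to0.
\]
Composing with $\phi$ and invoking uniqueness of minimal free resolutions yields constant matrices in $\GL_6(\CC)$ realizing a congruence between $A$ and $A^t$. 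The delicate point is to carry the sign $\phi^\vee=-\phi$ through the double-dual identifications so that this congruence, after replacing $A$ by an equivalent $XAX^t$, puts it into genuinely skew-symmetric form; that the pairing is alternating rather than symmetric is precisely what selects a skew-symmetric rather than a symmetric representative, in parallel with the three symmetric representations found at $s=0$ in Section~\ref{detrepp}. Once $A=-A^t$, the identity $\det A=(\pf A)^2$ compared with $\det A=cF^2$ gives $\pf A=c'F$, completing both halves of the correspondence.
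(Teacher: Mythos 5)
The paper itself never proves Proposition~\ref{BeauPfaff}: it is quoted from Beauville and used as a black box, so the only proof on record is Beauville's, which combines his Theorem A (existence of the linear resolution from cohomological vanishing) with his Theorem B (an ACM sheaf with a skew-symmetric self-duality admits a skew-symmetric resolution). Your proposal reconstructs exactly that route, and much of it is sound: the semistability argument, the count $h^0(\cE(1))=6$, global generation of $\cE(1)$, and the vanishings $H^0(\cE)=H^1(\cE)=0$ are all correct. Two compressions should still be flagged. First, identifying the kernel $\cK$ of $\cO_{\PP^2}(-1)^6\twoheadrightarrow\cE$ with $\cO_{\PP^2}(-2)^6$ requires $\cK$ to split as a sum of line bundles (Horrocks: $H^1(\PP^2,\cK(n))=0$ for all $n$), equivalently that the graded module $\bigoplus_n H^0(C,\cE(n))$ be generated in degree $1$; this follows from Castelnuovo--Mumford $1$-regularity of $\cE$ (using $H^1(\cE)=0$), not from Auslander--Buchsbaum and a degree count alone. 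Second, in the converse, before speaking of a sheaf ``locally free on $C$'' you must show that $\coker A$ is an $\cO_C$-module at all, i.e.\ killed by $F$ and not merely by $F^2=\det A$; a non-skew linear matrix with determinant $F^2$ can perfectly well have cokernel supported on the double curve. Skew-symmetry rescues this via the Pfaffian analogue of the adjugate: the matrix $\tilde{A}$ of signed $4\times4$ sub-Pfaffians satisfies $\tilde{A}A=\pf(A)\,\Id=F\,\Id$ (alternatively, your even-corank remark plus purity of $\coker A$ can be made to work, but as written this is asserted rather than proved).

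The genuine gap is the step you yourself call ``the delicate point'': it is not a detail but the crux of the forward direction, and your proposal does not contain the idea needed to close it. Uniqueness of minimal free resolutions only yields constants $X,Y\in\GL_6(\CC)$ with $A^t=XAY$, and this relation carries no sign information whatsoever, so it cannot distinguish skew-symmetrizable from symmetrizable matrices: for instance, a symmetric Vinnikov representation (the case $s=0$ in~(\ref{equantidiag})) satisfies it with $X=Y=\Id$, yet no odd-size linear matrix with nonzero determinant is equivalent to a skew-symmetric one. The missing argument is a homotopy-uniqueness computation. Lift $\phi\colon\cE\to\cE^\vee$ to a map of the two resolutions, $\phi_1\colon\cO_{\PP^2}(-2)^6\to\cO_{\PP^2}(-2)^6$ and $\phi_0\colon\cO_{\PP^2}(-1)^6\to\cO_{\PP^2}(-1)^6$, with $A^t\phi_1=\phi_0A$. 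Any chain homotopy between two such lifts is a map $\cO_{\PP^2}(-1)^6\to\cO_{\PP^2}(-2)^6$ and hence zero, so the lift of $\phi$ is \emph{unique}. Dualizing the lifted diagram shows that $(-\phi_0^t,-\phi_1^t)$ is also a lift of $-\phi^\vee=\phi$; uniqueness forces $\phi_1=-\phi_0^t$, and substituting into the chain condition gives $\phi_0A=-A^t\phi_0^t=-(\phi_0A)^t$. Thus the equivalent linear matrix $\phi_0A$ (with $\phi_0$ constant and invertible) is \emph{already} skew-symmetric, and $\pf(\phi_0A)=cF$ after scaling; no further massaging into a congruence $XAX^t$ is needed or available. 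This vanishing-of-homotopies argument is precisely where the hypothesis that $\phi$ is alternating rather than symmetric enters, and it is the content of Beauville's Theorem B; without it your proof delivers the resolution but not its skew-symmetric form.
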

Pfaffian representations are equivalent under the action
$$A\ \mapsto\ P\cdot A\cdot P^t,$$
where $P$ is an invertible $6\times 6$ constant matrix. By suitable choices of $P$ it is possible to reduce the number of parameters in $A$. In other words, we will reduce
the number of equivalent representations in each equivalence class.
The proof of Theorem~\ref{newthree} outlines an algorithm for finding all pfaffian representations (up to equivalence) of
$$C=\{(x,y,z)\in\PP^2 \ :\  y z^2 - x(x-y)(x-\lambda y)= 0\}. $$
For the sake of clearer notation we always write just the upper triangle of  skew--symmetric matrices.

\begin{theorem} \label{newthree} Let $C$ be a smooth cubic in the Weierstrass form
$$F(x,y,z)=y z^2 - x(x-y)(x-\lambda y).$$
A complete set of pfaffian representations of $F$ consists up to equivalence of three indecomposable representations and for the whole affine curve of decomposable representations:
$$
\substack{x\left[\!\!\!\!\begin{array}{cccccc}
 \substack{0} \!\!&\!\!  \substack{0} \!\!&\!\!
\substack{0} \!\!&\!\! \substack{0} \!\!&\!\!  \substack{0} \!\!&\!\!  \substack{1} \\
             \!\!&\!\! \substack{0} \!\!&\!\! \substack{0} \!\!&\!\! \substack{0} \!\!&\!\!  \substack{1} \!&\!\!\!  \substack{0} \\
             \!\!&\!\!                      \!\!&\!\! \substack{0} \!\!&\!\! \substack{1} \!\!&\!\!  \substack{0} \!\!&\!\!  \substack{0} \\
             \!\!&\!\!              \!\!&\!\!               \!\!&\!\!  \substack{0} \!\!&\!\!  \substack{0} \!\!&\!\!  \substack{0} \\
                          \!\!&\!\!               \!\!&\!\!
             \!\!&\!\!              \!\!&\!\!  \substack{0} \!\!&\!\!  \substack{0} \\
                         \!\!&\!\!               \!\!&\!\!
             \!\!&\!\!              \!\!&\!\!               \!\!&\!\!  \substack{0}
\end{array}\!\!\!\! \right] }+
\substack{z\left[\!\!\!\!\begin{array}{cccccc}
  \substack{0} \!\!&\!\!  \substack{0} \!\!&\!\! \substack{0} \!\!&\!\! \substack{0} \!\!&\!\!  \substack{1} \!\!&\!\!  \substack{0} \\
           \!\!&\!\!  \substack{0} \!\!&\!\! \substack{0} \!\!&\!\! \substack{1} \!\!&\!\!  \substack{0} \!&\!\!\!  \substack{0} \\
             \!\!&\!\!              \!\!&\!\!  \substack{0} \!\!&\!\! \substack{0} \!\!&\!\!  \substack{0} \!\!&\!\!  \substack{0} \\
                        \!\!&\!\!               \!\!&\!\!
             \!\!&\!\! \substack{0} \!\!&\!\!  \substack{0} \!\!&\!\!  \substack{0} \\
                          \!\!&\!\!               \!\!&\!\!
             \!\!&\!\!              \!\!&\!\!  \substack{0} \!\!&\!\!  \substack{0} \\
                        \!\!&\!\!               \!\!&\!\!
             \!\!&\!\!              \!\!&\!\!               \!\!&\!\!  \substack{0}
\end{array}\!\!\!\! \right] }+
\substack{y\substack{\left[ \!\!\!\! \begin{array}{cccccc}
0 \!\!&\!\! 1 \!\!&\!\! 0 \!\!&\!\! \frac{3  t^2 -  2 t (1+ \lambda) -  (1 - \lambda)^2}{4} \!\!&\!\! 0 \!\!&\!\!  \frac{t-1-\lambda}{2}  \\
\!\!&\!\! 0 \!\!&\!\! 0 \!\!&\!\! 0 \!\!&\!\! -\!t \!\!&\!\! 0 \\
\!\!&\!\!  \!\!&\!\! 0 \!\!&\!\! \frac{t-1-\lambda}{2} \!\!&\!\! 0 \!\!&\!\! -\!1  \\
 \!\!&\!\!  \!\!&\!\!  \!\!&\!\! 0 \!\!&\!\! 0 \!\!&\!\! 0 \\
 \!\!&\!\!  \!\!&\!\!  \!\!&\!\!  \!\!&\!\! 0 \!\!&\!\! 0  \\
  \!\!&\!\!  \!\!&\!\!  \!\!&\!\!  \!\!&\!\!  \!\!&\!\! 0
\end{array}\!\!\!\! \right] }} \mbox{ for } t=0,1,\lambda$$
and 
$$
\substack{x\left[\!\!\!\!\begin{array}{cccccc}
 \substack{0} \!\!&\!\!  \substack{0} \!\!&\!\!
\substack{0} \!\!&\!\! \substack{0} \!\!&\!\!  \substack{0} \!\!&\!\!  \substack{1} \\
             \!\!&\!\! \substack{0} \!\!&\!\! \substack{0} \!\!&\!\! \substack{0} \!\!&\!\!  \substack{1} \!&\!\!\!  \substack{0} \\
             \!\!&\!\!                      \!\!&\!\! \substack{0} \!\!&\!\! \substack{1} \!\!&\!\!  \substack{0} \!\!&\!\!  \substack{0} \\
             \!\!&\!\!              \!\!&\!\!               \!\!&\!\!  \substack{0} \!\!&\!\!  \substack{0} \!\!&\!\!  \substack{0} \\
                          \!\!&\!\!               \!\!&\!\!
             \!\!&\!\!              \!\!&\!\!  \substack{0} \!\!&\!\!  \substack{0} \\
                         \!\!&\!\!               \!\!&\!\!
             \!\!&\!\!              \!\!&\!\!               \!\!&\!\!  \substack{0}
\end{array}\!\!\!\! \right] }+
\substack{z\left[\!\!\!\!\begin{array}{cccccc}
  \substack{0} \!\!&\!\!  \substack{0} \!\!&\!\! \substack{0} \!\!&\!\! \substack{0} \!\!&\!\!  \substack{1} \!\!&\!\!  \substack{0} \\
           \!\!&\!\!  \substack{0} \!\!&\!\! \substack{0} \!\!&\!\! \substack{1} \!\!&\!\!  \substack{0} \!&\!\!\!  \substack{0} \\
             \!\!&\!\!              \!\!&\!\!  \substack{0} \!\!&\!\! \substack{0} \!\!&\!\!  \substack{0} \!\!&\!\!  \substack{0} \\
                        \!\!&\!\!               \!\!&\!\!
             \!\!&\!\! \substack{0} \!\!&\!\!  \substack{0} \!\!&\!\!  \substack{0} \\
                          \!\!&\!\!               \!\!&\!\!
             \!\!&\!\!              \!\!&\!\!  \substack{0} \!\!&\!\!  \substack{0} \\
                        \!\!&\!\!               \!\!&\!\!
             \!\!&\!\!              \!\!&\!\!               \!\!&\!\!  \substack{0}
\end{array}\!\!\!\! \right] }+
\substack{y\substack{\left[ \!\!\!\! \begin{array}{cccccc}
0 \!\!&\!\! 0 \!\!&\!\! 0 \!\!&\!\! \frac{3  t^2 -  2 t (1+ \lambda) -  (1 - \lambda)^2}{4} \!\!&\!\! s \!\!&\!\!  \frac{t-1-\lambda}{2}  \\
\!\!&\!\! 0 \!\!&\!\! 0 \!\!&\!\! -\!s \!\!&\!\! -\!t \!\!&\!\! 0 \\
\!\!&\!\!  \!\!&\!\! 0 \!\!&\!\! \frac{t-1-\lambda}{2} \!\!&\!\! 0 \!\!&\!\! -\!1  \\
 \!\!&\!\!  \!\!&\!\!  \!\!&\!\! 0 \!\!&\!\! 0 \!\!&\!\! 0 \\
 \!\!&\!\!  \!\!&\!\!  \!\!&\!\!  \!\!&\!\! 0 \!\!&\!\! 0  \\
  \!\!&\!\!  \!\!&\!\!  \!\!&\!\!  \!\!&\!\!  \!\!&\!\! 0
\end{array}\!\!\!\! \right], }}
$$
where $s^2  =t(t-1)(t-\lambda)$. Note that the last equation is exactly the affine part $F(t,1,s)$.
\end{theorem}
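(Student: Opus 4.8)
The plan is to mimic the proof of Lemma~\ref{vinncubb}, replacing the Lancaster--Rodman normalization of a single matrix by the corresponding normalization of a skew--symmetric pencil, and then to invoke Proposition~\ref{BeauPfaff} together with Atiyah's classification to separate the decomposable from the indecomposable representations. First I would start with an arbitrary skew--symmetric linear representation $A=x\,A_x+y\,A_y+z\,A_z$ with $\pf A=c\,F$ and normalize the part supported on the $x,z$--variables. Since $\pf A_x=c\,F(1,0,0)=-c\neq 0$, the matrix $A_x$ is nondegenerate, so after a congruence $A\mapsto P A P^t$ I may assume $A_x$ equals the standard symplectic form displayed in the theorem. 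Restricting to $y=0$ gives $\pf(x A_x+z A_z)=c\,F(x,0,z)=-c\,x^3$, hence $\det(x A_x+z A_z)=c^2 x^6$ and the relative endomorphism $A_x^{-1}A_z$ is nilpotent. This is the exact analogue of the step in Lemma~\ref{vinncubb} where the characteristic polynomial of $A_2$ is $x_0^3$. The monomial $y z^2$ of $F$ then fixes the Jordan/Kronecker type of the nilpotent skew pencil $(A_x,A_z)$, and using the residual symplectic congruences that fix $A_x$ I would bring $A_z$ to the rank--$4$ canonical form shown.

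Next I would reduce $A_y$. At this point the only remaining freedom is the stabilizer $G=\{P:\ PA_xP^t=A_x,\ PA_zP^t=A_z\}$ of the normalized pencil. The plan is to compute $G$ explicitly and use it to annihilate every entry of $A_y$ that is not forced to survive, arriving at a normal form depending on only a few scalar parameters. Imposing $\pf(x A_x+y A_y+z A_z)=c\,F$ and comparing coefficients of the monomials in $x,y,z$ then produces a system of polynomial equations in these parameters. I expect this system to collapse to the relation $s^2=t(t-1)(t-\lambda)$ together with the displayed entries, so that the generic solutions are exactly the decomposable family parametrized by the affine points $(t,s)$ of $C$ --- matching Proposition~\ref{prop22} via the block embedding $A\mapsto\left[\begin{smallmatrix}0&A\\-A^{t}&0\end{smallmatrix}\right]$ of Vinnikov's $3\times 3$ representations.

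Finally I would isolate the indecomposable representations. When $s=0$, i.e. at the three $2$--torsion points $t=0,1,\lambda$, the equations no longer rigidify the $(1,2)$--entry of $A_y$, so a new parameter survives that can be scaled to $1$ by the stabilizer; this yields the three matrices of the first family. To see that these are genuinely new I would invoke Proposition~\ref{BeauPfaff}: their cokernel is a rank $2$ bundle $\cE$ with $\det\cE=\cO_C$ and $H^0(C,\cE)=0$ sitting in a \emph{nonsplit} extension $0\to\cL\to\cE\to\cL\to 0$ of a $2$--torsion line bundle $\cL$, hence indecomposable by Atiyah's classification and not congruent to any block representation.

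The main obstacle will be the bookkeeping in the middle step: determining the stabilizer $G$ of the skew pencil $(A_x,A_z)$ precisely enough to reduce $A_y$ to the stated normal form, and then checking that the coefficient--matching leaves exactly one extra degree of freedom at the $2$--torsion points and nowhere else. Equivalently, the delicate point is proving that the only indecomposable rank $2$ bundles contributing are the three self--extensions of even theta characteristics, and that each occurs exactly once up to the congruence action.
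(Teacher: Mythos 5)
Your proposal follows essentially the same route as the paper's proof: normalize the skew pencil $(A_x,A_z)$ via the Lancaster--Rodman canonical form (invertibility of $A_x$ from the $x^3$ term, nilpotency from the absence of $z^3$, order of nilpotency from $yz^2$), compute the stabilizer of that canonical pair (the paper's Lemma~\ref{lemreduce}), use it to kill all but a few entries of $A_y$, and match Pfaffian coefficients to obtain $s^2=t(t-1)(t-\lambda)$, with the extra unremovable entry $c_{12}$ surviving exactly at $s=0$ and scalable to $1$, giving the three indecomposable representations. Your closing appeal to Proposition~\ref{BeauPfaff} and Atiyah's classification for indecomposability is a supplement the paper defers to its Section 5 discussion (the proof itself settles non-equivalence directly by noting the representations are not related by the congruence action), but it is consistent and correct.
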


The proof will be 
based on Lancaster--Rodman canonical forms of matrix pairs~\cite{rodman}.
Let $A=x A_x+z A_z+y A_y$ be a pfaffian representation of $C$.
Observe that $A_x$ is invertible and $A_z$ nilpotent since $C$ is defined by $\pf A$ and contains $x^3$ term and no $z^3$ term.
Moreover, $y z^2$ determines the order of nilpotency. This determines the unique skew-symmetric canonical form~\cite[Theorem 5.1]{rodman} for the first two matrices. In other words,  every pfaffian representation of $C$ can be put into
the following form
$$
\substack{x\left[\!\!\!\!\begin{array}{cccccc}
 \substack{0} \!\!&\!\!  \substack{0} \!\!&\!\!
\substack{0} \!\!&\!\! \substack{0} \!\!&\!\!  \substack{0} \!\!&\!\!  \substack{1} \\
             \!\!&\!\! \substack{0} \!\!&\!\! \substack{0} \!\!&\!\! \substack{0} \!\!&\!\!  \substack{1} \!&\!\!\!  \substack{0} \\
             \!\!&\!\!                      \!\!&\!\! \substack{0} \!\!&\!\! \substack{1} \!\!&\!\!  \substack{0} \!\!&\!\!  \substack{0} \\
             \!\!&\!\!              \!\!&\!\!               \!\!&\!\!  \substack{0} \!\!&\!\!  \substack{0} \!\!&\!\!  \substack{0} \\
                          \!\!&\!\!               \!\!&\!\!
             \!\!&\!\!              \!\!&\!\!  \substack{0} \!\!&\!\!  \substack{0} \\
                         \!\!&\!\!               \!\!&\!\!
             \!\!&\!\!              \!\!&\!\!               \!\!&\!\!  \substack{0}
\end{array}\!\!\!\! \right] }+
\substack{z\left[\!\!\!\!\begin{array}{cccccc}
  \substack{0} \!\!&\!\!  \substack{0} \!\!&\!\! \substack{0} \!\!&\!\! \substack{0} \!\!&\!\!  \substack{1} \!\!&\!\!  \substack{0} \\
           \!\!&\!\!  \substack{0} \!\!&\!\! \substack{0} \!\!&\!\! \substack{1} \!\!&\!\!  \substack{0} \!&\!\!\!  \substack{0} \\
             \!\!&\!\!              \!\!&\!\!  \substack{0} \!\!&\!\! \substack{0} \!\!&\!\!  \substack{0} \!\!&\!\!  \substack{0} \\
                        \!\!&\!\!               \!\!&\!\!
             \!\!&\!\! \substack{0} \!\!&\!\!  \substack{0} \!\!&\!\!  \substack{0} \\
                          \!\!&\!\!               \!\!&\!\!
             \!\!&\!\!              \!\!&\!\!  \substack{0} \!\!&\!\!  \substack{0} \\
                        \!\!&\!\!               \!\!&\!\!
             \!\!&\!\!              \!\!&\!\!               \!\!&\!\!  \substack{0}
\end{array}\!\!\!\! \right] }+
\substack{y\substack{\left[ \!\!\!\! \begin{array}{cccccc}
0 \!\!&\!\! c_{12} \!\!&\!\! c_{13} \!\!&\!\! c_{14} \!\!&\!\! c_{15} \!\!&\!\! c_{16} \\
\!\!&\!\! 0 \!\!&\!\! c_{23} \!\!&\!\! c_{24} \!\!&\!\! c_{25} \!\!&\!\! c_{26} \\
\!\!&\!\!  \!\!&\!\! 0 \!\!&\!\! c_{34} \!\!&\!\! c_{35} \!\!&\!\! c_{36} \\
 \!\!&\!\!  \!\!&\!\!  \!\!&\!\! 0 \!\!&\!\! c_{45} \!\!&\!\! c_{46} \\
 \!\!&\!\!  \!\!&\!\!  \!\!&\!\!  \!\!&\!\! 0 \!\!&\!\! c_{56}  \\
  \!\!&\!\!  \!\!&\!\!  \!\!&\!\!  \!\!&\!\!  \!\!&\!\! 0
\end{array}\!\!\!\! \right]. }}
$$
Since $\pf A$ defines the equation of $C$, we get
\begin{eqnarray*}
c_{36}& =&-1 ,\\
c_{26}&=& -c_{35} ,\\
c_{25} &= &-1-\lambda-c_{16}-c_{34},\\
c_{14} &= & c_{16} + c_{16}^2 + c_{34} + c_{16} c_{34} + c_{34}^2 + 2 c_{24} c_{35} + c_{16} c_{35}^2 
- c_{34} c_{35}^2 -\\
&&
  c_{23} c_{45} - c_{13} c_{46} + c_{23} c_{35} c_{46} - c_{12} c_{56} + 
  c_{13} c_{35} c_{56} + \lambda (1 + c_{16}+ c_{34}), \\
c_{15} &= & -c_{24} - 
  c_{16} c_{35} + c_{34} c_{35} - c_{23} c_{46} - c_{13} c_{56}.
\end{eqnarray*}
There are $15-5$ parameters $c_{ij}$ left in the representation.
Additionally, the coefficient at $y^3$ equals
$c_{14} c_{26} c_{35} - c_{14} c_{25} c_{36} - c_{13} c_{26} c_{45} + c_{12} c_{36} c_{45} + 
   c_{16} (c_{25} c_{34} - c_{24} c_{35} + c_{23} c_{45}) + c_{13} c_{25} c_{46} - c_{12} c_{35} c_{46} - 
   c_{15} (c_{26} c_{34} - c_{24} c_{36} + c_{23} c_{46}) + c_{14} c_{23} c_{56} - c_{13} c_{24} c_{56} + 
   c_{12} c_{34} c_{56} =0$.

\begin{lemma} \label{lemreduce}
The action
$A\ \mapsto\ P\cdot A\cdot P^t$ preserves the canonical form of the first two matrices in the representation if and only if  
$P$ equals 
$$\left[ \begin{array}{cc}
P_1 & P_2 \\
P_3 & P_1^{-1}+P_3 P_1^{-1}P_2
\end{array}\right]\ \mbox{or}\ 
\left[ \begin{array}{cc}
P_2 & P_1 \\
-P_1^{-1}+P_3 P_1^{-1}P_2 & P_3
\end{array}\right]$$ 
where $P_1$ is invertible and $P_i$ are of the form
$$\left[ \begin{array}{ccc}
p_{i1} & p_{i2} & p_{i3} \\
0 & p_{i1} & p_{i2}  \\
0 & 0 & p_{i1}  
\end{array}\right],\ \ i=1,2,3.$$
\end{lemma}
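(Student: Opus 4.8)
The plan is to describe the stabilizer directly. Since the Lancaster--Rodman form of the pencil $(A_x,A_z)$ is unique, ``preserving the canonical form of the first two matrices'' means precisely that $P A_x P^t = A_x$ and $P A_z P^t = A_z$, where $A_x,A_z$ are the two fixed skew-symmetric matrices displayed above. The first equation says $P$ lies in the symplectic group of the nondegenerate form $A_x$. Rewriting it as $A_x P^t = P^{-1} A_x$ and substituting into the second equation, I would show that the two conditions together are equivalent to: \emph{$P$ is symplectic for $A_x$ and $P$ commutes with $M := A_z A_x^{-1}$.} This reformulation is what makes the whole computation tractable.

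Next I would compute $M$ in the $3+3$ block splitting. Both matrices have the shape $\left[\begin{smallmatrix} 0 & S \\ -S & 0\end{smallmatrix}\right]$ with $S$ symmetric: for $A_x$ the block is the reversal matrix $K=\left[\begin{smallmatrix} 0&0&1\\0&1&0\\1&0&0\end{smallmatrix}\right]$ (so $K^2=\Id$), and for $A_z$ it is $N=\left[\begin{smallmatrix} 0&1&0\\1&0&0\\0&0&0\end{smallmatrix}\right]$. A short computation gives $A_x^{-1}=\left[\begin{smallmatrix} 0 & -K \\ K & 0\end{smallmatrix}\right]$ and hence $M = A_z A_x^{-1} = \left[\begin{smallmatrix} NK & 0 \\ 0 & NK\end{smallmatrix}\right]$, where $NK = \left[\begin{smallmatrix} 0&1&0\\0&0&1\\0&0&0\end{smallmatrix}\right]$ is the regular nilpotent Jordan block $J$. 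Writing $P$ in $3\times 3$ blocks, commuting with $\operatorname{diag}(J,J)$ forces each block to commute with $J$, hence to equal $a\,\Id + bJ + cJ^2$. This is exactly the claimed upper-triangular Toeplitz shape of the $P_i$.

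It then remains to impose symplecticity on blocks of this restricted shape. The key algebraic fact I would use is that conjugation by the reversal matrix transposes a Toeplitz matrix, $K T K = T^t$, equivalently $K T^t = T K$. Combined with the observation that upper-triangular Toeplitz matrices all commute (they are polynomials in $J$), the four block-equations expressing $P A_x P^t = A_x$ collapse: the two diagonal equations become commutators $[T,T']=0$ and hold automatically, the two off-diagonal equations are transposes of one another, and the single surviving relation is $P_{11}P_{22} - P_{12}P_{21} = \Id$.

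Finally I would solve this relation, viewing the blocks as elements of the commutative local ring $R = \CC[J]/(J^3)$ with maximal ideal $(J)$ and residue field $\CC$. The equation says the $2\times 2$ matrix of blocks has ``determinant'' the unit $\Id$, so it is invertible over $R$, and its reduction modulo $(J)$ is invertible over $\CC$; in particular its first row is nonzero, so either $P_{11}$ or $P_{12}$ has nonzero constant term and is therefore invertible in $R$. If $P_{11}$ is invertible I solve for $P_{22}=P_{11}^{-1}+P_{21}P_{11}^{-1}P_{12}$ (using commutativity), obtaining the first displayed form; if instead $P_{12}$ is invertible I solve for the $(2,1)$ block and obtain the second form. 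The converse is immediate, since each displayed $P$ has Toeplitz blocks satisfying $P_{11}P_{22}-P_{12}P_{21}=\Id$ and hence fulfils both original equations. I expect the main obstacle to be exactly this last case split: checking that the two cases are genuinely exhaustive and that the algebra reproduces the stated blocks, including the minus sign in the $(2,1)$ position of the second form.
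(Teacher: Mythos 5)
Your proof is correct, and it reaches the paper's two key intermediate facts --- that all four $3\times 3$ blocks of $P$ are upper-triangular Toeplitz, and that they satisfy $P_1P_4-P_2P_3=\Id$ --- by a genuinely different route. The paper proves an intertwining observation (that $YB=BY'$ for both canonical matrices $B$ forces $\triangle$-form blocks and pins down $Y'$) ``directly by comparing matrix elements'', applies it with $Y=P$, $Y'=(P^t)^{-1}$ to get simultaneously the Toeplitz structure and the explicit block form of $P^{-1}$, and then reads off $P_1P_4-P_2P_3=\Id$ from $PP^{-1}=\Id$. You instead eliminate $P^t$ between the two stabilizer equations, so the condition becomes ``$P$ is symplectic for $A_x$ and commutes with $M=A_zA_x^{-1}=\operatorname{diag}(J,J)$'', obtain the Toeplitz shape from the standard fact that the centralizer of a regular nilpotent Jordan block is $\CC[J]$, and extract the determinant relation by expanding the symplectic condition with the reversal identity $KTK=T^t$. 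Your treatment of the case split is also tighter: where the paper asserts that invertibility of $P$ with $\triangle$ blocks forces one of $P_1,P_2$ to be invertible (true, but left to the reader --- if both had zero constant term, the third row of the block row $[\,P_1\ P_2\,]$ would vanish), you reduce $P_{11}P_{22}-P_{12}P_{21}=\Id$ modulo the maximal ideal of the local ring $\CC[J]/(J^3)$, which makes exhaustiveness and the unit property of the relevant block immediate. What your route buys is conceptual transparency and scalability: the centralizer argument and the identity $KTK=T^t$ hold in any size, so the same proof describes the stabilizer for the $3r\times 3r$ analogue, which is relevant to the paper's suggested $9\times 9$ computations. What the paper's route buys is that it is entirely elementary and produces the explicit inverse $P^{-1}$ as a by-product. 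From the determinant relation onward the two arguments coincide: commutativity of $\CC[J]$ lets each of you solve for the remaining block in the two cases, yielding the two displayed forms including the minus sign.
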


\begin{proof}
Denote 
$$I=\left[ \begin{array}{ccc}
0 & 0 & 1  \\
0 & 1 & 0   \\
1 & 0 & 0 
\end{array}\right]\ \mbox{ and }\ N=\left[ \begin{array}{ccc}
0 & 1 & 0   \\
1 & 0 & 0  \\
0 & 0 & 0 
\end{array}\right].$$
We will need the following obvious observation, which can be proved directly by comparing matrix elements:\\
Let $Y,Y'$ be $6\times 6$  matrices for which 
$$Y .\left[ \begin{array}{cc}
0 & I \\
-I & 0
\end{array}\right]=\left[ \begin{array}{cc}
0 & I \\
-I & 0
\end{array}\right].Y'\ \mbox{ and }\  Y .\left[ \begin{array}{cc}
0 & N\\
-N & 0
\end{array}\right]=\left[ \begin{array}{cc}
0 & N \\
-N & 0
\end{array}\right].Y'\ \mbox{ hold.}$$
Then 
$Y=\left[ \begin{array}{cc}
Y_1 & Y_2 \\
Y_3 & Y_4
\end{array}\right]$ and
$Y'=\left[ \begin{array}{cc}
Y_4^t & -Y_3^t \\
-Y_2^t & Y_1^t
\end{array}\right],$ 
where 
$$Y_i=\left[ \begin{array}{ccc}
y_{i1} & y_{i2} & y_{i3} \\
 0 & y_{i1} & y_{i2}  \\
 0 & 0 &  y_{i1} 
\end{array}\right],\ \ i=1,2,3,4.$$
We call the specific form of the above Toeplitz matrices "$\triangle$ form".

Now we can find all invertible
$$P=\left[ \begin{array}{cc}
P_1 & P_2 \\
P_3 & P_4
\end{array}\right]$$ that satisfy
$$P.\left[ \begin{array}{cc}
0 & I \\
-I & 0
\end{array}\right].P^t=\left[ \begin{array}{cc}
0 & I \\
-I & 0
\end{array}\right]\ \mbox{ and }\ 
P.\left[ \begin{array}{cc}
0 & N \\
-N & 0
\end{array}\right].P^t=\left[ \begin{array}{cc}
0 & N \\
-N & 0
\end{array}\right].$$
By the above observation all $P_i$'s are of $\triangle$ form. Moreover, 
\begin{eqnarray*}
 p_{11} p_{41}-p_{21} p_{31} =1,\\
p_{22} p_{31} + p_{21} p_{32} - p_{12} p_{41} - p_{11} p_{42}=0,\\
p_{23} p_{31} + p_{22} p_{32} + p_{21} p_{33} - p_{13} p_{41} - p_{12} p_{42} - p_{11} p_{43}=0.
\end{eqnarray*}
In other words,
if $P_1$ is invertible then  $P_4=P_1^{-1}+P_3P_1^{-1}P_2$. 
The same way we see that $P_3=-P_2^{-1}+P_1P_2^{-1}P_4$ when $P_2$ is invertible.

Since $P$ is invertible and consists of $\triangle$ blocks, at least one of $P_1,P_2$ is also invertible. Note that 
$$\left[ \begin{array}{cc}
P_1 & P_2 \\
P_3 & P_4
\end{array}\right].\left[ \begin{array}{cc}
0 & -\Id \\
\Id & 0
\end{array}\right]=\left[ \begin{array}{cc}
P_2 & -P_1 \\
P_4 & -P_3
\end{array}\right]$$
exchanges $P_1$ and $P_2$ which finishes the proof.
\end{proof}

The action of Lemma~\ref{lemreduce} enables us to reduce the number of parameters $c_{ij}$. 
We can choose such $P$ that its action eliminates
\begin{eqnarray}\label{reduceq}
c_{13}= c_{23}  = c_{46} = c_{56} = 0,\ c_{35} = 0  \mbox{ and }  c_{16}=c_{34}.
\end{eqnarray}
Indeed, if we choose $p_{11}=1$, the above condtions determine $p_{12},p_{13}$ and $p_{22},p_{23},p_{32},p_{33}$:
\begin{eqnarray*} 
p_{32} & \rightarrow & c_{56} - c_{35} p_{31} + c_{56} p_{31} p_{21}, \\
p_{22} & \rightarrow & -c_{23} + c_{35} p_{21},\\
 p_{33} & \rightarrow &    \frac1 2 ((c_{16} - c_{34} + c_{35}^2 - c_{23} c_{56}) p_{31} + 2 c_{46} (1 + p_{31} p_{21})), \\
p_{23} & \rightarrow & \frac1 2 (-2 c_{13} + (-c_{16} + c{34} + c_{35}^2 - c_{23} c_{56}) p_{21}), \\
p_{12} & \rightarrow & -c_{35} + c_{56} p_{21},\\
p_{13} & \rightarrow & \frac1 2 (c_{16} - c_{34} + c_{35}^2 - c_{23} c_{56} + 2 c_{46} p_{21}).
\end{eqnarray*}

The relations among $c_{ij}$ then simplify to:
\begin{eqnarray}
c_{14}  &=&  3 c_{16}^2 + \lambda + 2 c_{16} (1 + \lambda) ,\nonumber \\ 
c_{24}&=&-c_{15}, \nonumber \\  
  0&=&c_{15}^2 - 8 c_{16}^3 - c_{12} c_{45} -\lambda - \lambda^2 - 
 8 c_{16}^2 (1 + \lambda) - 2 c_{16} (1 + 3 \lambda + \lambda^2). \label{1rel}
\end{eqnarray}

which leaves us with 4 parameters $c_{12},c_{45},c_{15},c_{16}$ and equation~(\ref{1rel}) connecting them:
$$\left[ \begin{array}{cccccc}
0 & \mathbf{c_{12}} & 0 &c_{14} &\mathbf{c_{15}}&\mathbf{c_{16}} \\
& 0&0 & -\!c_{15}& -\!1\!\!-\!\!\lambda\!\!-\!\!2c_{16} & 0 \\
&& 0 & c_{16} &0 & -\!1 \\
& && 0 & \mathbf{c_{45}} & 0 \\
 &  &&&0 & 0  \\
& &&&&0
\end{array} \right]. $$

It is easy to check that $A \mapsto P\cdot A\cdot P^t$ from Lemma~\ref{lemreduce} preserves all zeros and $-1$ in the above matrix if and only if  
$$P_i=\left[ \begin{array}{ccc}
 p_{i1} & 0 & 0  \\
  0 & p_{i1} & 0  \\
 0 & 0 &  p_{i1} 
\end{array}\right] \mbox{ for } i=1,2,3,4,\ \mbox{ together with }\  p_{11} p_{41}-p_{21} p_{31} =1.$$
We can use this "diagonal" action to make $c_{45}=0$ by chosing appropriate $p_{41}$ like in 
(\ref{reduceq}). When $c_{15}\neq 0$ we can furthermore make $c_{12}=0$ by  $p_{11}=p_{41}=1,  p_{31}=0, p_{21}=-c_{12}/2 c_{15}$. 
The only case left to consider is $c_{15}=0$. The action which keeps $c_{45}=0$ maps $c_{12}\mapsto c_{12} p_{11}^2$ where $p_{11}p_{41}=1$ and $p_{31}=0$. Thus either $c_{12}=0$ or we can make $c_{12}=1$.

In order to simplify notations even further, we introduce parameters $t$ and $s$ by 
$c_{16}=\frac1 2(t-1-\lambda)$ and $c_{15}=s$.
When $c_{45}=c_{12}=0$ the  matrix becomes
$$\left[ \begin{array}{cccccc}
0 & 0 & 0 &\frac{3  t^2 -  2 t (1+ \lambda) -  (1 - \lambda)^2}{4}
 &s& \frac{t-1-\lambda}{2} \\
& 0&0 & -\!s& -\!t& 0 \\
&& 0 & \frac{t-1-\lambda}{2} &0 & -\!1 \\
& && 0 &0 & 0 \\
 &  &&&0 & 0  \\
& &&&&0
\end{array} \right] $$
and relation (\ref{1rel}) in the new parameters equals 
$s^2  -t(t-1)(t-\lambda)=0$.

Additionally we get 
$$\left[ \begin{array}{cccccc}
0 & 1 & 0 &\frac{3  t^2 -  2 t (1+ \lambda) -  (1 - \lambda)^2}{4}
 &0& \frac{t-1-\lambda}{2} \\
& 0&0 & 0& -\!t& 0 \\
&& 0 & \frac{t-1-\lambda}{2} &0 & -\!1 \\
& && 0 &0 & 0 \\
 &  &&&0 & 0  \\
& &&&&0
\end{array} \right] $$ where $t$ is one of the three solutions of
$0= -t(t-1)(t-\lambda)$.

\begin{remark}\label{remkdec}\rm{The representations in Theorem~\ref{newthree} are non-equivalent 
to each other, since they are not connected by the action $A \mapsto P\cdot A\cdot P^t$. 
}\end{remark}

\section{A remark about the moduli space of rank 2 bundles}
\label{pfaffrep}

Rank $2$ bundles with trivial determinant and no sections lie in the open set 
$$M_C(2,\cO_C)\ \backslash \ \Theta_{2,\cO_C},$$
where $M_C(2,\cO_C)$ is the moduli space of semistable rank 2 bundles with determinant $\cO_C$ and 
$ \Theta_{2,\cO_C}=\{\cE\in M_C(2,\cO_C) \, : \, h^0(C,\cE)\neq 0 \}$.
In ~\cite[$\S$ 4]{beauville} and~\cite[$\S$ 6]{buckos} we can find that there are no stable bundles in  $M_C(2,\cO_C)$ and the unstable part consists of decomposable vector bundles of the form $\cL\oplus\cL^{-1}$ for $\cL $ in the Jacobian $J C$. Moreover, $\Theta_{2,\cO_C}=\{ \cO_C\oplus\cO_C \}$. This yields a $1-1$ correspondence between the points in $M_C(2,\cO_C)\ \backslash \ \Theta_{2,\cO_C}$ and the open subset of Kummer variety
$$\left(JC\ \backslash \ \{\cO_C \} \right)/_{\equiv},$$
where $\equiv$ is the involution $\cL \mapsto \cL^{-1}$.

Let $A$ and $-A^t$ be $3\times 3$ determinantal representations with cokernels $\cL$ and $\cL^{-1}$ respectively, like in Section~\ref{detrepp}. Obviously $\cL\oplus\cL^{-1}$ and $\cL^{-1}\oplus\cL$ are isomorphic rank 2 vector bundles. This is aligned with the corresponding decomposable pfaffian representations. Indeed, even though $A$ and $-A^t$ are not 
necessarily equivalent determinantal representations,
$$\left[ \begin{array}{cc}
0 & A \\
-A^t & 0
\end{array}\right]\ \ \mbox{ and }\ \ \left[ \begin{array}{cc}
0 & -A^t \\
A & 0
\end{array}\right]$$
are equivalent pfaffian representations since
$$\left[ \begin{array}{cc}
0 & \Id \\
\Id & 0
\end{array}\right]\left[ \begin{array}{cc}
0 & A \\
-A^t & 0
\end{array}\right]\left[ \begin{array}{cc}
0 & \Id \\
\Id & 0
\end{array}\right]= \left[ \begin{array}{cc}
0 & -A^t \\
A & 0
\end{array}\right].$$

\begin{remark}{\rm  
Each point in the the moduli space $M_C(2,\cO_C)$
corresponds to a decomposable bundle which induces a decomposable pfaffian representation. However, this does not imply that every rank 2  bundle on $C$ is decomposable. This is because the moduli space consists of S-equivalence classes rather than isomorphic bundles. 

Take for example the cokernel of a symmetric $3\times 3$ representation; one of the three cases where
$s=0$ in (\ref{equantidiag}). Then $\cL$ is a $2-$torsion point on $JC$, or in other words an even theta characteristic. For this line bundle $\cL\cong \cL^{-1}$, the direct sum $\cL\oplus \cL$ and the non-trivial extension of $\cL$ by $\cL$ represent the same point in the moduli space. These two bundles are clearly not isomorphic and can be realized as cokernels of matrices from Theorem~\ref{newthree},
$$
\substack{x\left[\!\!\!\!\begin{array}{cccccc}
 \substack{0} \!\!&\!\!  \substack{0} \!\!&\!\!
\substack{0} \!\!&\!\! \substack{0} \!\!&\!\!  \substack{0} \!\!&\!\!  \substack{1} \\
             \!\!&\!\! \substack{0} \!\!&\!\! \substack{0} \!\!&\!\! \substack{0} \!\!&\!\!  \substack{1} \!&\!\!\!  \substack{0} \\
             \!\!&\!\!                      \!\!&\!\! \substack{0} \!\!&\!\! \substack{1} \!\!&\!\!  \substack{0} \!\!&\!\!  \substack{0} \\
             \!\!&\!\!              \!\!&\!\!               \!\!&\!\!  \substack{0} \!\!&\!\!  \substack{0} \!\!&\!\!  \substack{0} \\
                          \!\!&\!\!               \!\!&\!\!
             \!\!&\!\!              \!\!&\!\!  \substack{0} \!\!&\!\!  \substack{0} \\
                         \!\!&\!\!               \!\!&\!\!
             \!\!&\!\!              \!\!&\!\!               \!\!&\!\!  \substack{0}
\end{array}\!\!\!\! \right] }+
\substack{z\left[\!\!\!\!\begin{array}{cccccc}
  \substack{0} \!\!&\!\!  \substack{0} \!\!&\!\! \substack{0} \!\!&\!\! \substack{0} \!\!&\!\!  \substack{1} \!\!&\!\!  \substack{0} \\
           \!\!&\!\!  \substack{0} \!\!&\!\! \substack{0} \!\!&\!\! \substack{1} \!\!&\!\!  \substack{0} \!&\!\!\!  \substack{0} \\
             \!\!&\!\!              \!\!&\!\!  \substack{0} \!\!&\!\! \substack{0} \!\!&\!\!  \substack{0} \!\!&\!\!  \substack{0} \\
                        \!\!&\!\!               \!\!&\!\!
             \!\!&\!\! \substack{0} \!\!&\!\!  \substack{0} \!\!&\!\!  \substack{0} \\
                          \!\!&\!\!               \!\!&\!\!
             \!\!&\!\!              \!\!&\!\!  \substack{0} \!\!&\!\!  \substack{0} \\
                        \!\!&\!\!               \!\!&\!\!
             \!\!&\!\!              \!\!&\!\!               \!\!&\!\!  \substack{0}
\end{array}\!\!\!\! \right] }+
\substack{y\substack{\left[ \!\!\!\! \begin{array}{cccccc}
0 \!\!&\!\! 0 \!\!&\!\! 0 \!\!&\!\! \frac{3  t^2 -  2 t (1+ \lambda) -  (1 - \lambda)^2}{4} \!\!&\!\! 0 \!\!&\!\!  \frac{t-1-\lambda}{2}  \\
\!\!&\!\! 0 \!\!&\!\! 0 \!\!&\!\! 0 \!\!&\!\! -\!t \!\!&\!\! 0 \\
\!\!&\!\!  \!\!&\!\! 0 \!\!&\!\! \frac{t-1-\lambda}{2} \!\!&\!\! 0 \!\!&\!\! -\!1  \\
 \!\!&\!\!  \!\!&\!\!  \!\!&\!\! 0 \!\!&\!\! 0 \!\!&\!\! 0 \\
 \!\!&\!\!  \!\!&\!\!  \!\!&\!\!  \!\!&\!\! 0 \!\!&\!\! 0  \\
  \!\!&\!\!  \!\!&\!\!  \!\!&\!\!  \!\!&\!\!  \!\!&\!\! 0
\end{array}\!\!\!\! \right] }}
$$
and
\begin{equation}\label{skewmat}
\substack{x\left[\!\!\!\!\begin{array}{cccccc}
 \substack{0} \!\!&\!\!  \substack{0} \!\!&\!\!
\substack{0} \!\!&\!\! \substack{0} \!\!&\!\!  \substack{0} \!\!&\!\!  \substack{1} \\
             \!\!&\!\! \substack{0} \!\!&\!\! \substack{0} \!\!&\!\! \substack{0} \!\!&\!\!  \substack{1} \!&\!\!\!  \substack{0} \\
             \!\!&\!\!                      \!\!&\!\! \substack{0} \!\!&\!\! \substack{1} \!\!&\!\!  \substack{0} \!\!&\!\!  \substack{0} \\
             \!\!&\!\!              \!\!&\!\!               \!\!&\!\!  \substack{0} \!\!&\!\!  \substack{0} \!\!&\!\!  \substack{0} \\
                          \!\!&\!\!               \!\!&\!\!
             \!\!&\!\!              \!\!&\!\!  \substack{0} \!\!&\!\!  \substack{0} \\
                         \!\!&\!\!               \!\!&\!\!
             \!\!&\!\!              \!\!&\!\!               \!\!&\!\!  \substack{0}
\end{array}\!\!\!\! \right] }+
\substack{z\left[\!\!\!\!\begin{array}{cccccc}
  \substack{0} \!\!&\!\!  \substack{0} \!\!&\!\! \substack{0} \!\!&\!\! \substack{0} \!\!&\!\!  \substack{1} \!\!&\!\!  \substack{0} \\
           \!\!&\!\!  \substack{0} \!\!&\!\! \substack{0} \!\!&\!\! \substack{1} \!\!&\!\!  \substack{0} \!&\!\!\!  \substack{0} \\
             \!\!&\!\!              \!\!&\!\!  \substack{0} \!\!&\!\! \substack{0} \!\!&\!\!  \substack{0} \!\!&\!\!  \substack{0} \\
                        \!\!&\!\!               \!\!&\!\!
             \!\!&\!\! \substack{0} \!\!&\!\!  \substack{0} \!\!&\!\!  \substack{0} \\
                          \!\!&\!\!               \!\!&\!\!
             \!\!&\!\!              \!\!&\!\!  \substack{0} \!\!&\!\!  \substack{0} \\
                        \!\!&\!\!               \!\!&\!\!
             \!\!&\!\!              \!\!&\!\!               \!\!&\!\!  \substack{0}
\end{array}\!\!\!\! \right] }+
\substack{y\substack{\left[ \!\!\!\! \begin{array}{cccccc}
0 \!\!&\!\! 1 \!\!&\!\! 0 \!\!&\!\! \frac{3  t^2 -  2 t (1+ \lambda) -  (1 - \lambda)^2}{4} \!\!&\!\! 0 \!\!&\!\!  \frac{t-1-\lambda}{2}  \\
\!\!&\!\! 0 \!\!&\!\! 0 \!\!&\!\! 0 \!\!&\!\! -\!t \!\!&\!\! 0 \\
\!\!&\!\!  \!\!&\!\! 0 \!\!&\!\! \frac{t-1-\lambda}{2} \!\!&\!\! 0 \!\!&\!\! -\!1  \\
 \!\!&\!\!  \!\!&\!\!  \!\!&\!\! 0 \!\!&\!\! 0 \!\!&\!\! 0 \\
 \!\!&\!\!  \!\!&\!\!  \!\!&\!\!  \!\!&\!\! 0 \!\!&\!\! 0  \\
  \!\!&\!\!  \!\!&\!\!  \!\!&\!\!  \!\!&\!\!  \!\!&\!\! 0
\end{array}\!\!\!\! \right] }},\end{equation}
for $t$ satisfying $0=t(t-1)(t-\lambda)$.
}\end{remark}

\section{Determinantal representations of order $\geq 2$}

Consider a linear matrix $A$  with $\det A=F^r$, where $F$ defines $C$.
Since $C$ is smooth, we will prove that the cokernel of $A$ is a vector bundle of rank $r$. Indeed, by~\cite[Theorem A]{beauville} the cokernel is an arithmetically Cohen-Macaulay (ACM) sheaf. Over a regular scheme any Cohen-Macaulay sheaf is locally free, thus it is a vector bundle. Furthermore, localizing at the generic point of $C$, we can locally write $A$ as a diagonal matrix $[F,\ldots,F,0\ldots,0]$, where $F$ occurs $r$ times.
The same result was obtained in~\cite{eisenbud} and~\cite{bhs} using purely algebraic methods for matrix factorizations of polynomials. 

Atiyah~\cite{atiyah} classified indecomposable vector bundles on $C$ in the following theorem.
\begin{theorem}[\cite{atiyah}, Theorem 5, Corollary 1] \label{atiyahthm}
Let $C$ be an elliptic curve.
\begin{itemize}
\item[(i)] For any $r>0$ there exists a unique (up to isomorphism) indecomposable vector bundle  $\cF_r$ with $h^0(\cF_r)=1$. Moreover, $\cF_r$ is defined inductively by the short exact sequence
$$0 \rightarrow \cO_C \rightarrow \cF_{r+1} \rightarrow \cF_r \rightarrow 0, \mbox{ where } \cF_1=\cO_C.$$ 
\item[(ii)] For any indecomposable rank $r$ bundle $\cE$ of degree 0, there exists a line bundle  $\cL$ such that 
$\cE\cong \cF_r\otimes \cL$ and $\cL^{\otimes r}=\det \cE$.
\item[(iii)] $\cF_r$ is self-dual,  $\cF_r\cong \cF^{\vee}_r$.
\end{itemize}
\end{theorem}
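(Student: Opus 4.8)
The plan is to prove the three statements together by induction on $r$, with Serre duality on the elliptic curve as the main engine. The crucial feature of $C$ is that $\omega_C\cong\cO_C$, so for every bundle $\cG$ one has $H^1(C,\cG)\cong H^0(C,\cG^\vee)^\vee$, and since Riemann--Roch gives $\chi(\cG)=\deg\cG$ in genus $1$, every degree-$0$ bundle satisfies $h^0=h^1$. I would first record the two basic cohomological lemmas on which everything rests: a degree-$0$ indecomposable bundle on $C$ is semistable, and it satisfies $h^0\le 1$. For the existence in (i) I start from $\cF_1=\cO_C$ and build $\cF_{r+1}$ as a nonsplit extension $0\to\cO_C\to\cF_{r+1}\to\cF_r\to 0$. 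Such extensions are classified by $\ext^1(\cF_r,\cO_C)=H^1(C,\cF_r^\vee)$, which by Serre duality and the inductive hypothesis equals $H^0(C,\cF_r)^\vee\cong\CC$; hence the nonsplit class is unique up to scalar and $\cF_{r+1}$ is well defined. Feeding the sequence into cohomology, the connecting map $H^0(\cF_r)\to H^1(\cO_C)$ is cup product with the nonzero extension class between two one-dimensional spaces, so it is an isomorphism and $h^0(\cF_{r+1})=h^0(\cO_C)=1$.

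I expect the main obstacle to be the indecomposability and uniqueness halves of (i). For indecomposability I would show by induction that $\operatorname{End}(\cF_r)$ is a local ring---every endomorphism is a scalar plus a nilpotent---which rules out nontrivial idempotents and hence any direct-sum splitting. For uniqueness, let $\cE$ be any indecomposable bundle of rank $r+1$, degree $0$ with $h^0(\cE)=1$. A nonzero section $\cO_C\to\cE$ must be nowhere vanishing: a vanishing divisor would yield a line subbundle of positive degree, contradicting semistability of $\cE$. The resulting quotient $\cE'$ has rank $r$ and degree $0$; checking that $\cE'$ is again indecomposable with $h^0(\cE')=1$ lets the inductive hypothesis identify $\cE'\cong\cF_r$, and indecomposability of $\cE$ forces the extension to be nonsplit, so $\cE\cong\cF_{r+1}$.

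For (ii) I would use the theta locus $\Theta_\cE=\{\cM\in JC:h^0(\cE\otimes\cM)\neq 0\}$. Since $\chi(\cE\otimes\cM)=0$ for every $\cM\in\pic^0C$ and, by semistability, the generic twist has no sections, $\Theta_\cE$ is an effective divisor of degree $r$ on $JC\cong C$, in particular nonempty. Choosing $\cM\in\Theta_\cE$, the bundle $\cE\otimes\cM$ is indecomposable of rank $r$ and degree $0$ with $h^0\ge 1$, hence $h^0=1$ by the basic bound, so the uniqueness in (i) gives $\cE\otimes\cM\cong\cF_r$. Therefore $\cE\cong\cF_r\otimes\cM^{-1}$, and setting $\cL=\cM^{-1}$ and taking determinants (recall $\det\cF_r=\cO_C$) yields $\det\cE=\cL^{\otimes r}$, as required.

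Finally (iii) follows from (i) by induction. The base case $\cF_1^\vee=\cO_C=\cF_1$ is clear. Dualizing the defining sequence and using $\cF_r^\vee\cong\cF_r$ gives $0\to\cF_r\to\cF_{r+1}^\vee\to\cO_C\to 0$. Dualizing preserves indecomposability, so $\cF_{r+1}^\vee$ is indecomposable, and the same connecting-map computation as in (i)---now with $H^1(\cF_r)\cong H^0(\cF_r)^\vee\cong\CC$ and the extension nonsplit because $\cF_{r+1}^\vee$ is indecomposable---shows $h^0(\cF_{r+1}^\vee)=1$. Thus $\cF_{r+1}^\vee$ is an indecomposable rank-$(r+1)$, degree-$0$ bundle with a one-dimensional space of sections, and the uniqueness in (i) forces $\cF_{r+1}^\vee\cong\cF_{r+1}$.
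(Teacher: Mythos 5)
The paper does not prove this theorem at all --- it is imported verbatim from Atiyah (Theorem 5 and its Corollary 1) --- so there is no internal proof to compare against; your proposal has to stand on its own as a reconstruction of Atiyah's induction, whose skeleton it does follow. Much of it is sound: the existence of $\cF_{r+1}$ via the one-dimensional $\ext^1(\cF_r,\cO_C)$, the connecting-map computation giving $h^0(\cF_{r+1})=1$ (the cup-product map is nonzero because it is the Serre-duality pairing, which is perfect), the twisting argument in (ii), and the dualization argument in (iii). The genuine gap is that the second of your two ``basic cohomological lemmas'' --- $h^0(\cE)\le 1$ for every indecomposable degree-$0$ bundle --- is not basic at all: it is essentially equivalent to the uniqueness assertion in (i), and nothing in your write-up proves it. It is also indispensable exactly where you invoke it: the theta-locus argument in (ii) only yields $h^0(\cE\otimes\cM)\ge 1$, while your uniqueness statement applies only to bundles with $h^0$ \emph{equal} to $1$. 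As written, the argument is therefore circular: uniqueness is established only under the hypothesis $h^0=1$, and the reduction of (ii) to that hypothesis is precisely the unproved lemma.

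The same missing idea sits inside your uniqueness step, where you write ``checking that $\cE'$ is again indecomposable with $h^0(\cE')=1$'' and move on; this check is the crux, because the quotient of an indecomposable bundle by the image of a section can perfectly well decompose a priori. To close it, decompose $\cE'=\oplus_{i=1}^k\cE'_i$ into indecomposables: each $\cE'_i$ has degree $0$ (it is a quotient of the semistable slope-$0$ bundle $\cE$, and the degrees sum to $0$); each component $e_i$ of the extension class in $\oplus_i\ext^1(\cE'_i,\cO_C)$ must be nonzero, since otherwise $\cE'_i$ splits off from $\cE$; Serre duality $\ext^1(\cE'_i,\cO_C)\cong H^0(\cE'_i)^\vee$ then forces $h^0(\cE'_i)\ge 1$; and since the connecting map $H^0(\cE')\to H^1(\cO_C)$ is a nonzero functional, one gets $h^0(\cE)=h^0(\cE')\ge k$. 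Only now does the hypothesis $h^0(\cE)=1$ force $k=1$. If instead you run the induction under the weaker hypothesis $h^0(\cE)\neq 0$ --- which is what (ii) actually supplies, and which makes the $h^0\le 1$ bound a corollary rather than an assumption --- then ruling out $k\ge 2$ additionally requires Atiyah's automorphism trick on $\oplus_i\cF_{r_i}$, using maps $\cF_{r_j}\to\cF_{r_i}$ that are nonzero on sections to kill one component of the extension class. Some version of this analysis must appear for the induction to close; without it, both the uniqueness half of (i) and all of (ii) remain open. (Two smaller soft spots: the locality of $\operatorname{End}(\cF_r)$ is likewise only announced, though the standard argument --- every endomorphism preserves the line of the unique section, and a mismatch of the induced scalars on sub and quotient would split the extension --- fills it in; and nonemptiness of $\Theta_\cE$ is easier than your divisor-degree claim, since any degree-$0$ sub-line bundle of $\cE$, which exists by semistability, already provides the required twist.)
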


By knowing all possible cokernels of determinantal representations, we can  find (up to equivalence) all determinantal representations of  $F$~\cite[Proposition 1.11.]{beauville}. 
For $r=2$ this 1--1 correspondence gives the following corollary.

\begin{corollary}\label{clasrep6}
Let $A$ be a linear $6\times 6$ matrix with $\det A=F^2$. Then $A$ is either equivalent to a block matrix
\begin{equation}\label{usualblock}
\left[\begin{array}{cc}
A_1 & 0\\
0 & A_2
\end{array}\right],\end{equation}
or the cokernel of $A$ equals to a nontrivial extension of a degree 0 line bundle $\cL\neq O_C$ by itself. Recall that such $\cL$ is the cokernel of some $3\times 3$ determinantal representation of $C$.  
\end{corollary}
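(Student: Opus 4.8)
The plan is to pass to the cokernel and read off the two alternatives from Atiyah's classification, Theorem~\ref{atiyahthm}. By the discussion at the beginning of this section $\cE:=\coker A$ is a rank $2$ vector bundle on $C$, sitting in
$$0 \rightarrow \cO_{\PP^2}(-2)^6 \stackrel{A}{\longrightarrow} \cO_{\PP^2}(-1)^6 \rightarrow \cE \rightarrow 0.$$
First I would pin down its numerical invariants. Taking Euler characteristics and using $\chi(\cO_{\PP^2}(-1))=\chi(\cO_{\PP^2}(-2))=0$ gives $\chi(\cE)=0$; since $\cE$ is supported on the genus one curve $C$, Riemann--Roch on $C$ yields $\deg\cE=\chi(\cE)=0$. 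The same sequence, together with $H^0(\cO_{\PP^2}(-1))=H^1(\cO_{\PP^2}(-2))=0$, shows $H^0(\PP^2,\cE)=0$, and as $\cE$ is supported on $C$ this equals $H^0(C,\cE)=0$. Hence $\cE$ is a rank $2$, degree $0$ bundle with no global sections, precisely the class governed by the order $2$ cokernel correspondence \cite[Proposition 1.11]{beauville}.

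Next I would split according to whether $\cE$ is decomposable. If $\cE\cong\cL_1\oplus\cL_2$ with $\cL_1,\cL_2$ line bundles, then $\deg\cL_1+\deg\cL_2=0$; because a line bundle of positive degree on an elliptic curve has sections, the vanishing $H^0(\cE)=0$ forces $\deg\cL_1=\deg\cL_2=0$ and $\cL_i\neq\cO_C$. By Proposition~\ref{prop22} each $\cL_i$ is the cokernel of a $3\times 3$ determinantal representation $A_i$, and the direct sum of their resolutions presents $\cL_1\oplus\cL_2$ as the cokernel of the block matrix~(\ref{usualblock}). Since the order $2$ correspondence is a bijection on equivalence classes and $\coker A\cong\coker\left[\begin{smallmatrix} A_1 & 0 \\ 0 & A_2\end{smallmatrix}\right]$, the representation $A$ is equivalent to (\ref{usualblock}).

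If instead $\cE$ is indecomposable, Theorem~\ref{atiyahthm}(ii) gives $\cE\cong\cF_2\otimes\cL$ for a line bundle $\cL$ with $\deg\cL=0$, so twisting the defining sequence of $\cF_2$ by $\cL$ realizes $\cE$ as a nonsplit extension $0\rightarrow\cL\rightarrow\cE\rightarrow\cL\rightarrow 0$. Since $h^0(\cF_2)=1$ while $H^0(\cE)=0$, necessarily $\cL\neq\cO_C$; then $H^0(\cL)=0$ and $\cE$ is exactly the nontrivial self-extension of a degree $0$ line bundle $\cL\neq\cO_C$, which by Proposition~\ref{prop22} is itself the cokernel of a $3\times 3$ representation. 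This is the second alternative, completing the dichotomy.

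The numerical steps ($\deg\cE=0$ and $H^0(\cE)=0$) are routine. The point demanding the most care is the decomposable case, where one must upgrade an abstract splitting $\cE\cong\cL_1\oplus\cL_2$ of the cokernel into an honest block decomposition of $A$ up to equivalence. I would anchor this on the correspondence of \cite[Proposition 1.11]{beauville} being a genuine bijection on equivalence classes, so that isomorphic cokernels force equivalent representations, combined with its compatibility with direct sums, i.e.\ the additivity of the minimal free resolution under $\oplus$.
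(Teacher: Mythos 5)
Your proof is correct, and its skeleton matches the paper's: pass to the cokernel bundle $\cE$, establish rank $2$, degree $0$ and $H^0(C,\cE)=0$, split into the decomposable/indecomposable dichotomy, and translate back to matrices via Proposition~\ref{prop22} together with uniqueness of the resolution $0 \to \cO_{\PP^2}(-2)^6 \to \cO_{\PP^2}(-1)^6 \to \cE \to 0$. Where you genuinely diverge is the indecomposable case. The paper presents $\cE$ as an extension $0\to\cL_1\to\cE\to\cL_2\to 0$ of line bundles and computes, via Serre duality on the elliptic curve, $\ext^1(\cL_2,\cL_1)\cong H^0(C,\cL_1\otimes\cL_2^{-1})$, concluding that a nontrivial extension can only exist when $\cL_1\otimes\cL_2^{-1}\cong\cO_C$; Atiyah's Theorem~\ref{atiyahthm} appears there only as a consistency check. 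You instead invoke Theorem~\ref{atiyahthm}(ii) directly, writing $\cE\cong\cF_2\otimes\cL$ and twisting the defining sequence $0\to\cO_C\to\cF_2\to\cO_C\to 0$, which produces the nonsplit self-extension at once and dispenses with the Ext computation. Your route also tightens a loose point in the paper: the paper asserts without argument that the line bundles $\cL_1,\cL_2$ in its extension are degree $0$ cokernels of $3\times 3$ representations, whereas you derive $\deg\cL_i=0$ and $\cL_i\neq\cO_C$ honestly from $H^0(C,\cE)=0$ (a positive-degree summand or a trivial summand would have sections). What the paper's Ext argument buys in exchange is independence from the full strength of Atiyah's classification---it needs only Serre duality---which matters if one wishes to adapt the statement to settings where no Atiyah-type classification is available. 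One item you left implicit, $H^1(\PP^2,\cE)=0$, is not actually needed for your argument, since you obtain the block representation constructively from Proposition~\ref{prop22} and only use uniqueness of minimal resolutions (automatic here, as all entries of the matrices have degree $1$) to upgrade the isomorphism of cokernels to an equivalence of representations.
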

\begin{proof}
Denote the cokernel of $A$ by $\cE$. 
Consider the short exact sequence 
$$0 \rightarrow \cL_1 \rightarrow \cE \rightarrow \cL_2 \rightarrow 0,$$
where the line bundles $ \cL_1, \cL_2$ are cokernels of $3\times 3$ determinantal representations of $C$. By the Riemann-Roch formula all $ \cL_1, \cL_2$ and $\cE$ have degree 0. Then $\cE$ either splits 
and defines (\ref{usualblock}), or is a nontrivial extension if and only if 
$$\ext^1( \cL_2, \cL_1)\cong \ext^1( \cL_2 \otimes  \cL_1^{-1}, \cO_C)\cong H^1( C, \cL_2 \otimes  \cL_1^{-1})\cong H^0( C,\cL_1\otimes \cL_2^{-1})\neq 0.$$
The only line bundle on $C$ with nontrivial group of sections is $\cO_C$.
This means that $\cE$ is indecomposable if and only if $\cL_1\otimes \cL_2^{-1}= \cO_C$. This is consistent with Atiyah's Theorem~\ref{atiyahthm} where every indecomposable rank 2 bundle of degree 0 fits into the exact sequence
\begin{equation}\label{atiyasequ}0 \rightarrow \cL \rightarrow \cE \rightarrow \cL \rightarrow 0\end{equation}
for some line bundle $\cL$ with $\det\cE=\cL^{\otimes 2}$ .
 By~\cite[Proposition 1.11.]{beauville}, given a coherent  sheaf $\cE$ on $\PP^2$, there exists an exact sequence
$$0 \rightarrow  \cO_{\PP^2}(-2)^6 \stackrel{A}{ \longrightarrow} \cO_{\PP^2}(-1)^6 \rightarrow  \cE \rightarrow 0$$
if and only if $H^0(\PP^2,\cE)=H^1(\PP^2,\cE)=0$.
This gives the 1--1 correspondence between indecomposable rank two bundles $\cE$ with $\det\cE=\cL^{\otimes 2}$  for some $\cL\neq \cO_C$ of degree 0, and indecomposable determinantal representations $A$ with $\det A=F^2$ and $\coker A=\cE$. Every  $\cL\neq \cO_C$ of degree 0 form (\ref{atiyasequ}) is by Proposition~\ref{prop22} a cokernel  of some $3\times 3$ determinantal representation of $C$. Clearly, $H^0(C,\cL)=0$ and by Rieman--Roch $H^1(C,\cL)=0$, which implies $H^0(C,\cE)=H^1(C,\cE)=0$.
On the other hand, minimal free resolution is unique up to isomorphism by~\cite{eisenbudsyzy}.
\end{proof}

Consider again $A$ with $\det A=F^r$ and cokernel $\cE$.
In~\cite[Theorem B]{beauville} we find that when $\cE$ is additionally equipped with an
$\epsilon-$symmetric ($\epsilon=(-1)^{r-1}$) invertible form 
$$\cE\times \cE\rightarrow  O_C(\alpha) \mbox{ for some }\alpha\in\ZZ,$$ then $A$ can be taken to be $\epsilon-$symmetric. 
From here it follows that 
there are exactly three symmetric $3\times 3$ determinantal representations of $C$ with cokernels $\cL_1,\cL_2,\cL_3$, the three even theta characteristics (i.e., $2-$torsion points in $JC$). Indeed, these are the only nontrivial line bundles of degree 0 with $\cL^{\otimes 2}\cong O_C$.
In rank 2 case, Theorem~\ref{atiyahthm} shows that
 the only indecomposable rank 2 bundles on $C$ with determinant $\cO_C$ are the nontrivial extensions of theta characteristics $\cL_i$ by themselves.
Take an indecomposable pfaffian representation from Theorem~\ref{newthree}.
Its cokernel is indeed the nontrivial extension of a $2-$torsion point $\cL_i = \cL_i^{-1}$. 
The three non-block matrices in Theorem~\ref{newthree} are by the above the only indecomposable skew-symmetric matrices with determinant $F^2$. For $r=3$ Ravindra and Tripathi~\cite{ravindra} proved the existence of eight indecomposable $9\times 9$ determinantal representations of $C$. The corresponding  indecomposable cokernels are extensions of the nontrivial $3-$torsion points on $JC$ with themselves (i.e., the eight flexes on the affine Weierstrass cubic). In order to explicitly construct these determinantal representations, we would need to repeat the proof of Theorem~\ref{newthree} for $9\times 9$ matrices. We invite the interested reader to compute them as "extensions" of block matrices with three nonzero blocks $A$; here $A$ comes from Lemma~\ref{vinncubb}  with $(s,t)$ being one of the eight flexes on the affine Weierstrass cubic.

As another interesting corollary of Theorem~\ref{newthree}  we are able to verify the conjecture of Kippenhahn~\cite{kippenh} for $M_6(\CC)$, the algebra of $6\times 6$ matrices.
Shapiro~\cite{shapiro} showed that the conjecture holds for $n\leq 5$. Laffey~\cite{laffey} constructed a counterexample with $n=8$. Thus $n=6$ is the only remaining case.

\begin{corollary}\label{kippcor} The  conjecture of Kippenhahn as stated here is true for  $n=6$. 

Let $H,K$ be $6\times 6$ complex Hermitian matrices and $F$ a homogeneous polynomial defining a smooth cubic, such that $$\det(x H + y K - z Id)=F(x,y,z)^2.$$
Then $H$ and $K$ are simultaneously unitarily similar to  direct sums. This means that there exists an unitary matrix $U$ and matrices $H_1,H_2,K_1,K_2\in M_3(\CC)$ such that
$$ UHU^{\ast}=\left[\begin{array}{cc}H_1 & 0 \\ 0 & H_2\end{array}\right] \mbox { and }  UKU^{\ast}= \left[\begin{array}{cc}K_1 & 0 \\ 0 & K_2\end{array}\right].$$
\end{corollary}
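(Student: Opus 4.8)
The plan is to translate the Kippenhahn problem about Hermitian matrices into the language of self-adjoint determinantal representations, and then deploy the classification of skew-symmetric (pfaffian) representations already established in Theorem~\ref{newthree}. The key conceptual bridge is that the hypothesis $\det(xH+yK-z\,\Id)=F(x,y,z)^2$ says precisely that $A(x,y,z)=xH+yK-z\,\Id$ is a self-adjoint determinantal representation of order $r=2$ of the smooth cubic $C=\{F=0\}$, with $A=A^{\ast}$. So the statement to be proved is that any such self-adjoint representation is, up to unitary equivalence $A\mapsto UAU^{\ast}$, a direct sum of two $3\times 3$ blocks.

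First I would invoke Corollary~\ref{clasrep6}: any $6\times 6$ linear matrix $A$ with $\det A=F^2$ either is equivalent to a block-diagonal $\bigl[\begin{smallmatrix}A_1&0\\0&A_2\end{smallmatrix}\bigr]$, or has cokernel a nontrivial self-extension of a $2$-torsion line bundle $\cL=\cL^{-1}$. Next I would rule out the indecomposable case under the \emph{self-adjoint} constraint. The idea is that a self-adjoint (Hermitian) representation, by the $\epsilon$-symmetric version of Beauville's correspondence (\cite[Theorem B]{beauville}, the discussion following Corollary~\ref{clasrep6}), forces the cokernel to carry a nondegenerate Hermitian form compatible with the extension; an indecomposable self-extension $0\to\cL\to\cE\to\cL\to 0$ cannot support such a form, because the Hermitian pairing would split the sequence. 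Concretely, the three indecomposable representations of Theorem~\ref{newthree} are \emph{skew-symmetric} (pfaffian, with $c_{12}=1$, $s=0$ at the $2$-torsion values $t=0,1,\lambda$) and are manifestly not self-adjoint, whereas the decomposable representations correspond to $\cL\oplus\cL^{-1}$; so the only self-adjoint possibility is the decomposable block form.

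Once $A$ is known to be \emph{equivalent} to a block matrix, the remaining task is to upgrade the general $\GL_6$ equivalence $A'=XAY$ to a \emph{unitary} one $Y=X^{\ast}$ that simultaneously block-diagonalizes $H$ and $K$. Here I would argue spectrally: the block decomposition of the cokernel $\cE\cong\cL\oplus\cL^{-1}$ yields an $A$-invariant orthogonal splitting of $\CC^6$. Because $H$ and $K$ are Hermitian and the invariant subspace (the image of one $3\times 3$ summand, read off from the short exact sequence) together with its orthogonal complement are both preserved by the pencil $xH+yK$, a standard fact about commuting/pencil-invariant reductions of Hermitian matrices gives a single unitary $U$ conjugating both $H$ and $K$ into the displayed block form $\bigl[\begin{smallmatrix}H_1&0\\0&H_2\end{smallmatrix}\bigr]$, $\bigl[\begin{smallmatrix}K_1&0\\0&K_2\end{smallmatrix}\bigr]$ with $H_i,K_i\in M_3(\CC)$.

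\textbf{The main obstacle} I expect is precisely the last step: passing from an arbitrary equivalence $A'=XAY$ to a \emph{unitary} $U$ with $UHU^{\ast}$ and $UKU^{\ast}$ both block-diagonal. The general determinantal equivalence does not respect the Hermitian structure, so one cannot simply read off $U$ from $X$. The resolution is to work with the invariant subspace geometrically: the Hermitian pencil $xH+yK$ leaves invariant the sub-bundle structure coming from $\cL$, and because the form is nondegenerate and Hermitian, the invariant subspace is nondegenerate, so its orthogonal complement is also invariant and one takes $U$ to be the unitary sending the standard basis to an orthonormal basis adapted to this orthogonal splitting. Verifying that the splitting is both pencil-invariant \emph{and} orthogonal with respect to the standard Hermitian inner product is the crux, and it is exactly where self-adjointness (as opposed to mere $\GL_6$-equivalence) is used.
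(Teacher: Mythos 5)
Your reduction to self-adjoint determinantal representations and your appeal to Corollary~\ref{clasrep6} match the paper's strategy, but the step where you rule out the indecomposable case rests on a false claim, and this is precisely where the real content of the proof lies. You assert that a self-adjoint representation cannot have indecomposable cokernel because ``the Hermitian pairing would split the sequence,'' supported by the observation that the three indecomposable representations of Theorem~\ref{newthree} are skew-symmetric and hence ``manifestly not self-adjoint.'' This reasoning fails: whether some matrix in a $\GL_6$-equivalence class is Hermitian cannot be read off from one skew-symmetric member, and in fact ${i\mkern1mu}$ times a real skew-symmetric matrix \emph{is} Hermitian, so each indecomposable pfaffian representation of Theorem~\ref{newthree} (which has real entries) yields, after multiplication by ${i\mkern1mu}$, a genuinely self-adjoint linear pencil whose determinant is $F^2$ (up to a constant) and whose cokernel is the indecomposable self-extension of a $2$-torsion line bundle. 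The paper's own proof makes this explicit: every self-adjoint $A$ with indecomposable cokernel is shown to be unitarily equivalent either to an explicit self-adjoint family with entries $\pm{i\mkern1mu}s$, $(s,t)\in\RR^2$, or to ${i\mkern1mu}$ times the skew-symmetric form (\ref{skewmat}). So Hermitian representations of $F^2$ with indecomposable cokernel exist in abundance, and self-adjointness alone cannot exclude them; your argument, if correct, would prove the stronger statement that every self-adjoint $6\times 6$ representation of $F^2$ decomposes, which these examples refute.

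What actually excludes the indecomposable case --- and what your proposal never uses --- is \emph{definiteness}. The Kippenhahn hypothesis concerns the pencil $z\Id - xH - yK$, which is positive definite at $(x,y)=(0,0)$, and this definiteness survives the real change of coordinates bringing $F$ to Weierstrass form. The paper concludes by checking that none of the indecomposable self-adjoint normal forms above is definite, hence none can occur under the hypothesis; only the decomposable case survives. There, the unitary matrix $U$ is obtained not from your invariant-subspace argument (which, as you note, is the crux, and which you leave unverified: a splitting of the cokernel gives an equivalence $A'=XAY$, not a common invariant subspace of $H$ and $K$) but from Vinnikov's classification \cite{vinni1} of self-adjoint $3\times 3$ representations up to unitary equivalence. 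Any correct write-up must therefore carry the definiteness of the pencil through the argument; without it the statement being proved is false.
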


\begin{remark} {\rm
Note that  $F$ in Corollary~\ref{kippcor} defines a real cubic curve and that $z Id-x H-y K$ is a definite determinantal representation of $F$. In the terminology of linear matrix inequalities, $F$ is a \textit{real zero polynomial} and $(0,0)$ lies inside the convex set  of points $\{ (x,y)\in\RR^2\, :\, Id-x H-y K\geq 0\}$ called \textit{spectrahedron}. Spectrahedron is bounded by the compact part of the curve. For more constructions of definite determinantal representations of polynomials we refer the reader to~\cite{netzer} and~\cite{quarez} and the references therein.
}\end{remark}

\begin{proof}
Every smooth real cubic can be brought into a Weierstrass form by a real change of coordinates
$$\left[ \begin{array}{c}x\\y\\z \end{array} \right]\mapsto P \left[ \begin{array}{c}x\\y\\z \end{array} \right] ,
\mbox{ for some } P \in\GL_{3}(\RR).$$
In the new coordinates we get
$$\det(x A_x + y A_y + z A_z)=\left( -y z^2+x^3+\alpha x y^2+\beta y^3 \right)^2,\mbox{ where }\alpha,\beta\in \RR.$$
Note that $A_x, A_y, A_z$ are real linear combinations of $H,K,\Id$ and therefore Hermitian. Since $z Id-x H-y K$ is definite,  $A=x A_x + y A_y + z A_z$  is also definite.
We showed that the cokernel of  $A$ is a rank 2 bundle. Assume first that the cokernel is decomposable $\cL_1\oplus\cL_2$.
Then $A$ is equivalent to a block matrix 
$\left[\begin{array}{cc}A_1 & 0 \\ 0 & A_2\end{array}\right]$, where $\cL_i$ is the cokernel of $A_i$.  In particular, each line bundle $\cL_i$ satisfies the conditions in~\cite[Theorem 7]{vinnikov2}, therefore $A_i$ can be  brought by~\cite[Theorem 8]{vinni1} into one of the two self-adjoint forms     
\begin{equation}\label{equselfaj}
\pm\left( x \left[\begin{array}{ccc} 0&0&1\\ 0&1&0\\ 1&0&0 \end{array}\right]+z \left[\begin{array}{ccc} 0&1&0\\ 1&0&0\\ 0&0&0 \end{array}\right]
+y \left[\begin{array}{ccc} \alpha+\frac{3}{4}t_i^2 & {i\mkern1mu} s_i &\frac{t_i}{2} \\ 
-{i\mkern1mu} s_i&-t_i&0\\ 
\frac{t_i}{2} &0&-1 \end{array}\right]\right),
\end{equation} 
where $(s_i,t_i)\in\RR^2$ satisfy $-s_i^2=t_i^3+\alpha t_i+\beta$. Moreover, $A_i$ is unitarily equivalent to one of the above forms. It was shown in~\cite{vinni1}, that definite self-adjoint determinantal representations of $C$ are exactly those corresponding to the points $(s,t)$ in the compact part of $C(\RR)$. This self-adjoint canonical form can be obtained also directly, using  Lancaster--Rodman canonical forms for real
matrix pairs~\cite{rodman}, like we did in Section~\ref{detrepp}. Indeed, just replace $s$ by $ {i\mkern1mu} s$ in (\ref{equantidiag}).

Next we assume that the cokernel $\cE$ of $A$ is indecomposable. In Corollary~\ref{clasrep6} we showed that $\det \cE=\cL^{\otimes 2}$ for some line bundle $\cL\neq O_C$ of degree 0. 
Since $A$ is self-adjoint, $\cL$ is isomorphic to the cokernel of a self-adjoint representation  in~(\ref{equselfaj}),  $\cL_{s,t}$ where $ (s,t)\in\RR^2$. Note that $\cL_{s,t}^{-1}\cong \cL_{-s,t}$. 
Since $A^{\ast} (x,y,z)=A(\bar{x},\bar{y},\bar{z})$ and always holds $\cE\cong \cE^{\vee}\otimes \det\cE$, we conclude that $\cE^{\vee}=\ker A$ is a nontrivial extension of $\cL_{-s,t}$.

Analogous computations as in the proof of Theorem~\ref{newthree} show that $A$ is unitarily equivalent to
a self-adjoint representation
$$
\substack{x\left[\!\!\!\!\begin{array}{cccccc}
 \substack{0} \!\!&\!\!  \substack{0} \!\!&\!\!
\substack{0} \!\!&\!\! \substack{0} \!\!&\!\!  \substack{0} \!\!&\!\!  \substack{1} \\
             \!\!&\!\! \substack{0} \!\!&\!\! \substack{0} \!\!&\!\! \substack{0} \!\!&\!\!  \substack{1} \!&\!\!\!  \substack{0} \\
             \!\!&\!\!                      \!\!&\!\! \substack{0} \!\!&\!\! \substack{1} \!\!&\!\!  \substack{0} \!\!&\!\!  \substack{0} \\
             \!\!&\!\!              \!\!&\!\!               \!\!&\!\!  \substack{0} \!\!&\!\!  \substack{0} \!\!&\!\!  \substack{0} \\
                          \!\!&\!\!               \!\!&\!\!
             \!\!&\!\!              \!\!&\!\!  \substack{0} \!\!&\!\!  \substack{0} \\
                         \!\!&\!\!               \!\!&\!\!
             \!\!&\!\!              \!\!&\!\!               \!\!&\!\!  \substack{0}
\end{array}\!\!\!\! \right] }+
\substack{z\left[\!\!\!\!\begin{array}{cccccc}
  \substack{0} \!\!&\!\!  \substack{0} \!\!&\!\! \substack{0} \!\!&\!\! \substack{0} \!\!&\!\!  \substack{1} \!\!&\!\!  \substack{0} \\
           \!\!&\!\!  \substack{0} \!\!&\!\! \substack{0} \!\!&\!\! \substack{1} \!\!&\!\!  \substack{0} \!&\!\!\!  \substack{0} \\
             \!\!&\!\!              \!\!&\!\!  \substack{0} \!\!&\!\! \substack{0} \!\!&\!\!  \substack{0} \!\!&\!\!  \substack{0} \\
                        \!\!&\!\!               \!\!&\!\!
             \!\!&\!\! \substack{0} \!\!&\!\!  \substack{0} \!\!&\!\!  \substack{0} \\
                          \!\!&\!\!               \!\!&\!\!
             \!\!&\!\!              \!\!&\!\!  \substack{0} \!\!&\!\!  \substack{0} \\
                        \!\!&\!\!               \!\!&\!\!
             \!\!&\!\!              \!\!&\!\!               \!\!&\!\!  \substack{0}
\end{array}\!\!\!\! \right] }+
\substack{y\substack{\left[ \!\!\!\! \begin{array}{cccccc}
0 \!\!&\!\! \ast \!\!&\!\! \ast \!\!&\!\! \frac{3  t^2 -  2 t (1+ \lambda) -  (1 - \lambda)^2}{4 } \!\!&\!\! {i\mkern1mu} s \!\!&\!\!  \frac{t-1-\lambda}{2 }  \\
\!\!&\!\! 0 \!\!&\!\! \ast \!\!&\!\! - {i\mkern1mu} s \!\!&\!\! -\! t \!\!&\!\! 0 \\
\!\!&\!\!  \!\!&\!\! 0 \!\!&\!\! \frac{t-1-\lambda}{2 } \!\!&\!\! 0 \!\!&\!\! -\! 1 \\
 \!\!&\!\!  \!\!&\!\!  \!\!&\!\! 0 \!\!&\!\! 0 \!\!&\!\! 0 \\
 \!\!&\!\!  \!\!&\!\!  \!\!&\!\!  \!\!&\!\! 0 \!\!&\!\! 0  \\
  \!\!&\!\!  \!\!&\!\!  \!\!&\!\!  \!\!&\!\!  \!\!&\!\! 0
\end{array}\!\!\!\! \right] }} \mbox{ for } (s,t)\in \RR^2,$$
or ${i\mkern1mu} $ times the skew-symmetric representation~(\ref{skewmat})
$$
\substack{x\left[\!\!\!\!\begin{array}{cccccc}
 \substack{0} \!\!&\!\!  \substack{0} \!\!&\!\!
\substack{0} \!\!&\!\! \substack{0} \!\!&\!\!  \substack{0} \!\!&\!\!  \substack{i} \\
             \!\!&\!\! \substack{0} \!\!&\!\! \substack{0} \!\!&\!\! \substack{0} \!\!&\!\!  \substack{i} \!&\!\!\!  \substack{0} \\
             \!\!&\!\!                      \!\!&\!\! \substack{0} \!\!&\!\! \substack{i} \!\!&\!\!  \substack{0} \!\!&\!\!  \substack{0} \\
             \!\!&\!\!              \!\!&\!\!               \!\!&\!\!  \substack{0} \!\!&\!\!  \substack{0} \!\!&\!\!  \substack{0} \\
                          \!\!&\!\!               \!\!&\!\!
             \!\!&\!\!              \!\!&\!\!  \substack{0} \!\!&\!\!  \substack{0} \\
                         \!\!&\!\!               \!\!&\!\!
             \!\!&\!\!              \!\!&\!\!               \!\!&\!\!  \substack{0}
\end{array}\!\!\!\! \right] }+
\substack{z\left[\!\!\!\!\begin{array}{cccccc}
  \substack{0} \!\!&\!\!  \substack{0} \!\!&\!\! \substack{0} \!\!&\!\! \substack{0} \!\!&\!\!  \substack{i} \!\!&\!\!  \substack{0} \\
           \!\!&\!\!  \substack{0} \!\!&\!\! \substack{0} \!\!&\!\! \substack{i} \!\!&\!\!  \substack{0} \!&\!\!\!  \substack{0} \\
             \!\!&\!\!              \!\!&\!\!  \substack{0} \!\!&\!\! \substack{0} \!\!&\!\!  \substack{0} \!\!&\!\!  \substack{0} \\
                        \!\!&\!\!               \!\!&\!\!
             \!\!&\!\! \substack{0} \!\!&\!\!  \substack{0} \!\!&\!\!  \substack{0} \\
                          \!\!&\!\!               \!\!&\!\!
             \!\!&\!\!              \!\!&\!\!  \substack{0} \!\!&\!\!  \substack{0} \\
                        \!\!&\!\!               \!\!&\!\!
             \!\!&\!\!              \!\!&\!\!               \!\!&\!\!  \substack{0}
\end{array}\!\!\!\! \right] }+
\substack{y\substack{\left[ \!\!\!\! \begin{array}{cccccc}
0 \!\!&\!\! i \!\!&\!\! 0 \!\!&\!\! \frac{3  t^2 -  2 t (1+ \lambda) -  (1 - \lambda)^2}{-4 i} \!\!&\!\! 0 \!\!&\!\!  \frac{t-1-\lambda}{-2 i}  \\
\!\!&\!\! 0 \!\!&\!\! 0 \!\!&\!\! 0 \!\!&\!\! -\! i t \!\!&\!\! 0 \\
\!\!&\!\!  \!\!&\!\! 0 \!\!&\!\! \frac{t-1-\lambda}{-2 i} \!\!&\!\! 0 \!\!&\!\! -\!i  \\
 \!\!&\!\!  \!\!&\!\!  \!\!&\!\! 0 \!\!&\!\! 0 \!\!&\!\! 0 \\
 \!\!&\!\!  \!\!&\!\!  \!\!&\!\!  \!\!&\!\! 0 \!\!&\!\! 0  \\
  \!\!&\!\!  \!\!&\!\!  \!\!&\!\!  \!\!&\!\!  \!\!&\!\! 0
\end{array}\!\!\!\! \right] }} \mbox{ for } t=0,1,\lambda.$$
It is easy to check that non of these are definite and can thus not provide a counterexample to Kippenhahn's conjecture.  
\end{proof}

\end{document}